\numberwithin{equation}{section}
\newtheorem{theorem}{Theorem}[section]
\newtheorem{prop}[theorem]{Proposition}
\newtheorem{lemma}[theorem]{Lemma}
\newtheorem{corollary}[theorem]{Corollary}
\newtheorem{proposition}[theorem]{Proposition}
\newtheorem{example}[theorem]{Example}
\theoremstyle{definition}
\newtheorem{definition}[theorem]{Definition}
\newtheorem{remark}[theorem]{Remark}
\newcommand{\Extend}[5]{\ext@arrow0099{\arrowfill@#1#2#3}{#4}{#5}}
\newcommand{\R}{\mathbb{R}}  
\newcommand{\Z}{\mathbb{Z}}
\newcommand{\CC}{\mathbb{C}}
\newcommand{\II}{\mathrm{i}}
\newcommand{\p}{\partial} 
\newcommand{\ud}{\mathrm{d}}
\newcommand{\IP}[2]{\langle #1, #2\rangle}
\newcommand{\RRe}[1]{\mathrm{Re}(#1)}
\newcommand{\IIm}[1]{\mathrm{Im}(#1)}
\newcommand{\bs}[1]{\boldsymbol{#1}}
\begin{document}
\title{A Construction of non-degenerate $\Z_{2}$-harmonic functions on $\R^{n}$}

\author[Dashen Yan]{Dashen Yan}
\address{Department of Mathematics, Stony Brook University, Stony Brook, NY 11794-3651 USA. E-mail: dashen.yan@stonybrook.edu}

\maketitle
\begin{abstract}
	We discover an explicit construction of non-degenerate $\Z_{2}$-harmonic functions on $\R^{n},n\geq 3$, using a variant of ellipsoidal coordinates on $\R^{n}$. The branching set of these examples is a codimension-$2$ ellipsoid, providing the first known family of non-degenerate $\Z_{2}$-harmonic $1$-forms on $\R^{n}$ with compact branching sets. Moreover, the graph of the related $\Z_{2}$-harmonic one form in $T^{*}\R^{n}$ can be obtained as a certain limit of a specific sequence of Lawlor's necks in $\CC^{n}=T^{*}\R^{n}$. 
\end{abstract}

\section{Introduction}\label{S_1}%%%%%%%%%%%%
	Let $(M^{n},g), n\geq 2$ be an oriented Riemannian manifold, and $\Sigma\subset M$ be an oriented codimension-$2$ submanifold. Let $L$ be a flat $\R$ bundle over $M\setminus\Sigma$, such that any small loop linking along $\Sigma$ has monodromy $-1$ and $(E,\nabla)$ be a vector bundle equipped with a connection. A section $\alpha\in \Gamma(M\setminus \Sigma, E\otimes L)$ is called a $\Z_{2}$-harmonic section if $\alpha,\nabla \alpha\in L_{loc}^{2}$ and satisfies the harmonic equation
		\begin{equation*}
			\nabla^{*}\nabla \alpha=0, \text{ on } M\setminus \Sigma,
		\end{equation*} 
		where the derivatives are taken with respect to the flat structure on $L$. Of particular interest are the cases where $E$ is a trivial line bundle with a trivial connection, a cotangent bundle with a Levi-Civita connection, or a spinor bundle with a spin connection when $M$ is spin. In these cases, the harmonic sections are referred to as $\Z_{2}$-harmonic functions, $1$-forms, and spinors, respectively. Alternatively, $\Z_2$-harmonic sections can be viewed as a section on a double branched cover, which is anti-invariant under the covering involution.\\
		
	Multi-valued harmonic sections play important roles in the compactification problems in gauge theory \cite{Taubes12,Haydys15,Mazzeo16,Esfahani24} and in the field of special holonomy and calibrated geometry \cite{Donaldson17A,SqiHe23}. Despite its importance, a general existence theorem for $\Z_{2}$-harmonic sections is impossible, as it involves the choice of $\Sigma$, which is intrinsically a nonlinear problem. Research in this direction has been largely example-driven. Several examples, both compact and non-compact, have been studied previously by many people \cite{Chen24,SqiHe24,SqiHe25,Haydiy23,Taubes20,Taubes24}.\\
    
    In this paper, we provide an explicit construction for non-degenerate $\Z_{2}$-harmonic functions when $M=\R^{n}, n\geq 3$ and $\Sigma$ are codimension-$2$ ellipsoids. Our motivation for the construction originates from Donaldson's program \cite{Donaldson17} on the adiabatic limit of $G_{2}$ manifolds with coassociative $K3$ fibrations and branched maximal sections, which are a nonlinear analog of non-degenerate $\Z_{2}$-harmonic $1$-forms, appearing as the potential adiabatic limiting description for the $G_{2}$ structure. The examples we construct serve as geometric models to resolve the problem when the branched maximal sections do not avoid the $(-2)$-classes. We leave the further discussion in this direction to Section 1 in \cite{Yan25}.\\
		
	To begin, we focus on $\Z_{2}$-harmonic functions. Let $N$ be the normal bundle of $\Sigma$, and use the inverse of the exponential to define the map $\zeta$ from a tubular neighborhood of $\Sigma$ to $N$ in $(M,g)$. Using co-orientation, we can regard $N$ as a complex line bundle over $\Sigma$. If $\sigma$ is a section of the dual normal bundle $N^{-1}$, we obtain a complex-valued function $\IP{\sigma}{\zeta}$, which we will simply denote as $\sigma \zeta$. The monodromy around $\Sigma$ defines a square root $N^{1/2}$ with dual $N^{-1/2}$. Similarly, we can define a complex-valued function
		\begin{equation*}
			\sigma_{p}\zeta^{p}, \sigma_{p}\in \Gamma(\Sigma, N^{-p}),
		\end{equation*}
		for any half-integer $p$. According to Donaldson \cite{Donaldson17}, any $\Z_{2}$-harmonic function $f$ admits asymptotic behavior near $\Sigma$
		\begin{equation*}
			f=\RRe{A\zeta^{1/2}}+O(|\zeta|^{3/2}), A\in \Gamma(\Sigma, N^{-1/2}),
		\end{equation*}
		and if $A\equiv 0$,
		\begin{equation*}
			f=\RRe{B\zeta^{3/2}}+O(|\zeta|^{5/2}), B\in \Gamma(\Sigma, N^{-3/2}).
		\end{equation*}
		Donaldson showed that if $A\equiv 0$ and $B$ are nowhere-vanishing, then $f$ admits a good deformation theory. Such $\Z_{2}$-harmonic functions are called \textbf{non-degenerate} and will appear in gluing construction, for example, \cite{SqiHe24, Yan25}.\\

	Here, we will only consider harmonic functions on $\R^{n}, n\geq 3$, with the branching set $\Sigma$ being a compact smooth embedded submanifold. Outside a large ball that contains $\Sigma$, $L$ is a trivial line bundle. In other words, we can pick a \textbf{single-valued branch} of the $\Z_{2}$-harmonic function. Moreover, different choices differ by a sign. In this setting, we can regard a $\Z_{2}$-harmonic function $f$ as a harmonic function outside a large ball. Suppose $f$ has polynomial growth, then classical theory implies:
		\begin{equation}\label{S_1_Eqn_Asm}
			f=P+O(|\bs{x}|^{2-n}),
		\end{equation} 
		where $P$ is a harmonic polynomial. Weifeng Sun \cite{Weifeng22} and later Haydys, Mazzeo, Takahashi \cite{Haydiy23} showed that when $\Sigma$ is fixed, there is a $1$-$1$ correspondence between $\Z_{2}$ harmonic functions and harmonic polynomials $P$ related by Equation \ref{S_1_Eqn_Asm}. In their existence result, little is known about the asymptotic behavior. On the other hand, recently, Donaldson found a family of $\Z_2$-harmonic functions on $\R^{3}$ with the branching set being codimension-$2$ ellipses using twistor theory \cite{Donaldson25}. The goal of this paper is to use a variant of ellipsoidal coordinates to give an explicit construction of non-degenerate $\Z_2$-harmonic functions on $\R^{n}$, which recovers Donaldson's construction when $n=3$.\\
	\begin{theorem}\label{S_1_Thm_Main}
		For any positive numbers $h_{1},\cdots, h_{n-1}$, there exists a non-degenerate $\Z_{2}$-harmonic function $f_{\bs{h}}$ on $\R^{n}$, whose branching set is a codimension-$2$ ellipsoid
			\begin{equation*}
				E_{\bs{h}}: \sum_{i=1}^{n-1}\frac{x_{i}^{2}}{h_{i}^{2}}=1, x_{n}=0,
			\end{equation*}
			and such that, at large distance, we can pick a single-valued branch of $f_{\bs{h}}$, on which
			\begin{equation*}
				f_{\bs{h}}=a_{0}-\sum_{i=1}^{n}a_{i}x_{i}^{2}+O(|\bs{x}|^{2-n}).
			\end{equation*}
			Here, let $S(y)=\prod_{i=1}^{n-1}(y+h_{i}^{2})$ and $a_{i}$ be constants given by
				\begin{equation*}
					\begin{split}
						&a_{i}=\frac{\prod_{j=1}^{n-1}h_{j}}{2}\int_{0}^{\infty}\frac{\ud u}{(u^{2}+h_{i}^{2})\sqrt{S(u^{2})}}, 1\leq i\leq n-1;\\
						&a_{n}=-\frac{\prod_{j=1}^{n-1}h_{j}}{2}\int_{0}^{\infty}\frac{S'(u^{2})\ud u}{S(u^{2})^{3/2}};\\
						&a_{0}=\frac{\prod_{j=1}^{n-1}h_{j}}{2}\int_{0}^{\infty}\frac{\ud u}{\sqrt{S(u^{2})}}.
					\end{split}
				\end{equation*}
	\end{theorem}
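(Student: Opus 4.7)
My plan is to give an explicit integral formula for $f_{\bs h}$ adapted to the ellipsoidal geometry, then verify harmonicity, the $\Z_2$-branching with non-degeneracy, and the asymptotic expansion at infinity by direct computation.

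First, introduce the coordinate $\mu(\bs x)\ge 0$ defined as the largest real root of
\begin{equation*}
F(\lambda;\bs x):=\sum_{i=1}^{n-1}\frac{x_i^2}{\lambda+h_i^2}+\frac{x_n^2}{\lambda}=1.
\end{equation*}
Then $\mu$ is a non-negative continuous function on $\R^n$, smooth off the closed disk $D_{\bs h}:=\{x_n=0,\,\sum x_i^2/h_i^2\le 1\}$ and vanishing on $D_{\bs h}$. A local analysis near $p\in E_{\bs h}$ in normal coordinates with complex variable $\zeta$ gives $\mu=(N(p)/M(p))(\RRe{\zeta}+|\zeta|)+O(|\zeta|^2)$ for explicit positive constants $N(p),M(p)$; on the double cover of $\R^n$ branched over $E_{\bs h}$, $\mu$ lifts to a pair of roots $\mu_{\pm}$ of an associated quadratic which are swapped by the deck involution, supplying the $\Z_2$-branching structure.

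Second, propose the formula
\begin{equation*}
f_{\bs h}(\bs x):=\frac{\prod_j h_j}{2}\int_{\sqrt{\mu(\bs x)}}^{\infty}\left(1-\sum_{i=1}^{n-1}\frac{x_i^2}{u^2+h_i^2}-\frac{x_n^2}{u^2}\right)\frac{du}{\sqrt{S(u^2)}},
\end{equation*}
interpreted on the double cover with the appropriate sign choice. The integrand vanishes at $u=\sqrt{\mu}$ precisely because $F(\mu;\bs x)=1$, so boundary contributions from differentiating the lower limit in $x_k$ drop out identically.

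Third, for harmonicity I use the algebraic identity
\begin{equation*}
\sum_{k=1}^{n}\frac{1}{u^2+h_k^2}=\frac{(u^2 S(u^2))'}{u^2 S(u^2)}\qquad(h_n:=0),
\end{equation*}
equivalently $-\frac{d}{du}\bigl[\frac{1}{u\sqrt{S(u^2)}}\bigr]=\frac{\sum_k 1/(u^2+h_k^2)}{\sqrt{S(u^2)}}$. Combined with the implicit-function identities for $\partial_k\mu$ arising from $F(\mu;\bs x)=1$, notably $\sum_k x_k\,\partial_k\sqrt{\mu}/(\mu+h_k^2)=1/\sqrt{\mu}$, a direct calculation shows that the variable-limit and bulk contributions to $\Delta f_{\bs h}$ cancel exactly, yielding $\Delta f_{\bs h}=0$. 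Taylor expanding the integral in $\sqrt{\mu}$ near a point of $E_{\bs h}$ gives $f_{\bs h}=\RRe{B\zeta^{3/2}}+O(|\zeta|^{5/2})$ with $B$ explicit and nowhere vanishing on $E_{\bs h}$. Finally, the asymptotic at infinity follows from $\mu=|\bs x|^2+O(1)$ by term-by-term expansion of the integrand, producing $a_0-\sum_i a_ix_i^2+O(|\bs x|^{2-n})$; the coefficients match the stated integrals using the same algebraic identity.

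The main obstacle is the $\Z_2$-branching interpretation: the naive formula above is even in $x_n$ and would yield a single-valued harmonic function, so one must interpret it on the double cover using both roots $\mu_{\pm}$ (for instance, by taking the difference of the integrals from $\sqrt{\mu_+}$ and from $\sqrt{\mu_-}$ along appropriate contours in the complex plane) to obtain a genuinely $\Z_2$-anti-invariant function. A conceptually cleaner alternative is to work directly in the full ellipsoidal coordinate system, where the Laplacian separates into Lamé-type ODEs and $f_{\bs h}$ emerges as a specific anti-invariant separated solution whose explicit form recovers the integral expression above.
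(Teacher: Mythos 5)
There is a genuine gap: the explicit integral formula you propose does not define the function $f_{\bs{h}}$ of the theorem. Write your integrand as
\begin{equation*}
1-\sum_{i=1}^{n-1}\frac{x_{i}^{2}}{u^{2}+h_{i}^{2}}-\frac{x_{n}^{2}}{u^{2}}=\frac{(u^{2}-\mu_{n}^{2})\prod_{j=1}^{n-1}(u^{2}+\mu_{j}^{2})}{u^{2}S(u^{2})},
\end{equation*}
where $\mu=\mu_{n}^{2}$ is your largest root and $-\mu_{j}^{2}$, $j<n$, are the others. Since this vanishes at the lower limit $u=\sqrt{\mu}$, the substitution $u=\sqrt{\mu}\,v$ shows that as $|\bs{x}|\to\infty$ (so $\mu\sim|\bs{x}|^{2}$) your integral is $O(\mu^{(2-n)/2})=O(|\bs{x}|^{2-n})$. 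Thus you have constructed a \emph{decaying} harmonic function, not one with the quadratic asymptotic $a_{0}-\sum a_{i}x_{i}^{2}+O(|\bs{x}|^{2-n})$ the theorem asserts. A parallel failure occurs at $E_{\bs{h}}$: Taylor-expanding at $\mu_{n}=\mu_{n-1}=0$ produces a strictly positive constant term $\frac{\prod h_{j}}{2}\int_{0}^{\infty}\frac{u^{2}\prod_{j<n-1}(u^{2}+\mu_{j}^{2})}{S(u^{2})^{3/2}}\,\ud u$, so the function does not even vanish on $E_{\bs{h}}$, let alone begin at $\RRe{B\zeta^{3/2}}$. Both defects have the same source: your expression is even in each $x_{i}$, hence single-valued on the complement of the solid disk bounded by $E_{\bs{h}}$; it is a single-layer potential with a jump in the normal derivative across that disk, not a $\Z_{2}$-harmonic function branching over the ellipsoid.

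You flag the branching interpretation as the main obstacle, but the suggested fix (contour differences between $\sqrt{\mu_{+}}$ and $\sqrt{\mu_{-}}$) cannot work as stated: the other roots of $F(\lambda;\bs{x})=1$ are $-\mu_{j}^{2}<0$, so $\sqrt{\mu_{-}}$ is not real, and the naive odd part $\int_{-\mu_{n}}^{\mu_{n}}$ diverges because of the $x_{n}^{2}/u^{2}$ term. The ``conceptually cleaner alternative'' you mention only in passing is in fact the paper's approach, and it is essential rather than optional: one builds modified ellipsoidal coordinates that are genuine orthogonal coordinates on the double cover, separates the Laplacian into Lam\'e-type ODEs, and then takes the carefully tuned linear combination $f_{0}+(-1)^{n-2}\sum_{i}f_{2,i}/\prod_{j\neq i}(p_{i}-p_{j})$ of odd separated solutions so that, by the Vandermonde identity of Lemma~\ref{S_2_Lemma_Com}, the $\RRe{\zeta^{1/2}}$ terms cancel and the leading behavior becomes $\RRe{B\zeta^{3/2}}$ with $B$ nowhere vanishing. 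That cancellation \emph{is} the non-degeneracy, and it is not encoded in the single integral you wrote down.
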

    
	\begin{prop}\label{S_1_Prop_Main}
		Let $P$ be a degree-two homogeneous harmonic polynomial with the origin as its only critical point, whose index is $n-1$. Let $C_{n}$ be a positive constant. Then there exists a unique non-degenerate $\Z_2$-harmonic function $f$ constructed in Theorem \ref{S_1_Thm_Main}, such that we can pick a single-valued branch of $f$, on which
			\begin{equation*}
				f=C_{n}+P+O(|\bs{x}|^{2-n}), |\bs{x}|\to \infty.
			\end{equation*}
		Furthermore, when $C_{n}$ goes to zero, the ellipsoid scales down to $0$, and $f$ converges to $P$ outside any ball that contains the ellipsoid. 
	\end{prop}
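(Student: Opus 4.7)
The plan is to reduce the proposition to an inverse problem for the integrals in Theorem \ref{S_1_Thm_Main}, solve it by proving that the associated map is a global diffeomorphism, and finally use the construction's scaling homogeneity to read off the limit $C_n \to 0$. To set up the reduction, I would diagonalize $P$: since $P$ is a harmonic quadratic whose only critical point is a non-degenerate one of index $n-1$, after an orthogonal change of coordinates in $\R^n$ (the construction of Theorem \ref{S_1_Thm_Main} transforms covariantly under rotations) I may assume $P = \sum_{i=1}^n \lambda_i x_i^2$, with $\lambda_1,\dots,\lambda_{n-1} < 0$, $\lambda_n > 0$, $\sum_i \lambda_i = 0$, and principal axes aligned with the coordinate axes with the unique positive eigenvalue along $x_n$. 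Comparing to $f_{\bs h} = a_0 - \sum a_i x_i^2 + O(|\bs x|^{2-n})$, the statement reduces to finding $\bs h \in \R^{n-1}_{>0}$ with $a_i(\bs h) = -\lambda_i$ for $i = 1, \dots, n-1$ and $a_0(\bs h) = C_n$; the identity $\sum_{i=1}^n a_i \equiv 0$ makes $a_n(\bs h) = -\lambda_n$ automatic.

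The analytic heart is to show that $\Phi \colon \R^{n-1}_{>0} \to \R^{n-1}_{>0}$, $\bs h \mapsto (a_1(\bs h),\dots,a_{n-1}(\bs h))$, is a diffeomorphism. I would proceed in three substeps: (a) compute $\partial a_i/\partial h_j$ from the explicit integrals and establish pointwise non-singularity of the Jacobian via a positivity/Cauchy--Schwarz-type comparison on the integrands; (b) perform a boundary analysis, showing $a_i(\bs h) \to +\infty$ as $h_i \to 0$ (the integrand blows up near $u = 0$) and $a_i(\bs h) \to 0$ as $h_i \to \infty$, which makes $\Phi$ proper; and (c) apply Hadamard's global inverse function theorem (using the simple-connectedness of $\R^{n-1}_{>0}$) to upgrade (a) and (b) to a global diffeomorphism onto $\R^{n-1}_{>0}$. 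Together with the scaling homogeneity $a_i(t\bs h) = a_i(\bs h)/t$ and $a_0(t\bs h) = t\, a_0(\bs h)$ visible from the explicit formulas (which also implies $E_{t\bs h} = t E_{\bs h}$), the extra scaling parameter $t > 0$ provides the degree of freedom needed to match the prescribed $C_n$ once the direction of $\bs h$ has been fixed by the ratios $(-\lambda_1 : \cdots : -\lambda_{n-1})$.

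For the ``Furthermore'' part, as $C_n \to 0$ the scaling parameter $t \to 0$, so $\bs h \to 0$ in $\R^{n-1}_{>0}$ and the ellipsoid $E_{\bs h}$ shrinks homothetically to the origin. On any compact set $K$ lying outside a fixed open ball $B$ that eventually contains every ellipsoid, the single-valued branch of $f$ is a classical harmonic function whose asymptotic is $C_n + P + O(|\bs x|^{2-n})$; the $O$-remainder, being a harmonic function of order $2-n$ whose amplitude is controlled by the ellipsoid's radius via Green-function/Harnack estimates on $\R^n \setminus B$, shrinks uniformly to zero on $K$. Together with $C_n \to 0$, this yields $f|_K \to P|_K$ in $C^\infty$, which is the claimed convergence. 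The main obstacle will be Step 2 (the global diffeomorphism property of $\Phi$): pointwise non-singularity of the Jacobian is a direct computation, but promoting it to a global diffeomorphism onto the full positive orthant requires careful control of the multi-parameter integrals in all simultaneous degenerations of the $h_i$'s, and the monotonicity needed for the boundary analysis is not evident by inspection of the formulas.
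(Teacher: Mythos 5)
Your overall architecture parallels the paper's: diagonalize $P$ so the positive eigendirection aligns with $x_n$, reduce to prescribing the quadric coefficients $(a_1,\dots,a_{n-1})$ via an inverse problem for $\bs h$, note that $a_n=-\sum_{j<n}a_j$ handles the last coefficient automatically, and use the scaling homogeneity $a_i(t\bs h)=t^{-1}a_i(\bs h)$, $a_0(t\bs h)=t\,a_0(\bs h)$ (together with an overall scalar multiple of $f$) to decouple $C_n$ from the quadric. Your ``Furthermore'' argument is also in the same spirit; the paper's route is cleaner --- identify the rescaled family with $t^{-2}f_{\bs h}(t\bs x)$ via the uniqueness theorem of \cite{Weifeng22} --- which gives $C^\infty_{loc}$ convergence outside a fixed ball by direct substitution, without needing Green-function or Harnack estimates.

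The genuine gap is in the central step: surjectivity of $\Phi\colon\bs h\mapsto(a_1,\dots,a_{n-1})$. You propose to prove the strictly stronger claim that $\Phi$ is a global diffeomorphism by Hadamard's theorem, which requires (a) the Jacobian $(\partial a_i/\partial h_j)$ to be everywhere nonsingular and (b) properness on the whole orthant, including under simultaneous degenerations of several $h_i$. Neither is actually carried out: you sketch a ``Cauchy--Schwarz-type comparison'' for (a) and a one-variable blow-up/decay heuristic for (b), and you explicitly flag that the multi-parameter control and the needed monotonicity are not evident from the formulas. This is not a deferral you can afford. The paper deliberately claims only surjectivity and remarks that bijectivity is ``very likely'' but ``left to the interested readers,'' which is a strong signal that the Jacobian analysis you are proposing is not routine. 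The paper's own argument sidesteps it with a topological degree argument: writing the map in the variables $y_i=1/h_i$ so that $F_n(t\bs y)=tF_n(\bs y)$, restricting to the simplex $\Delta_n$, extending continuously to $\partial\Delta_n$ via the lower-dimensional maps $F_{n-1}$, and showing by induction on $n$ (base case $n=3$: intermediate value theorem) that $P_n\circ F_n|_{\partial\Delta_n}$ is homotopic to the identity, hence $P_n\circ F_n$ is surjective. If you want to retain your plan you must supply the nonsingularity and properness estimates; a lighter-weight fix is to replace Hadamard by a degree argument of this kind, which only requires continuity of the boundary extension and the scaling property you already identified.
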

	\begin{remark}\label{S_1_Rmk_2D}
		The corresponding $\Z_2$-harmonic $1$-form $\ud f$ can be viewed as a higher-dimensional analog of $\RRe{\sqrt{z^{2}-\epsilon^{2}}\ud z}$ on $\CC$. Roughly speaking, this family of $1$-forms serves as a local model of a family of non-degenerate $\Z_{2}$-harmonic $1$-forms with a shrinking branching set. In \cite{Yan25}, we develop a gluing theorem that glues these examples to the regular zeros of harmonic $1$-forms on compact manifolds, thereby producing many new examples of $\Z_{2}$-harmonic $1$-forms on compact manifolds. 
	\end{remark}
	
	In a related direction, in the compact setting, a $\Z_{2}$-harmonic one form is the infinitesimal branched deformation of special Lagrangians \cite{SqiHe23}. In a suitable setting, if we regard $T^{*}\R^{n}$ as $\CC^{n}$, Lawlor's necks can be viewed as a $2$-valued graph. More precisely, there exists an $\Z_{2}$-potential function $F$ on $\R^{n}$, such that Lawlor's necks is the $2$-valued graph $\ud F$ from $\R^{n}$ to $T^{*}\R^{n}$. A version of the infinitesimal branched deformation in our case is the following.
		\begin{prop}
			Let $f_{\bs{h}}$ be constructed as above. Then there exists a family of $\Z_{2}$-potential functions $F_{t}$ that define Lawlor's necks and branch along $E_{\bs{h}}$, such that $tF_{t}$ converges to $f_{\bs{h}}$ in $C^{\infty}_{loc}(\R^{n}\setminus E_{\bs{h}})$.
		\end{prop}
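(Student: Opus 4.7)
The strategy is to write Lawlor's neck potential $F_{t}$ in the same ellipsoidal coordinates used to construct $f_{\bs{h}}$, so that $F_{t}$ admits a one-dimensional integral representation whose integrand, expanded in $t$, matches the integrand defining $f_{\bs{h}}$ to leading order. Recall that a Lawlor neck in $\CC^{n}$ is a special Lagrangian submanifold asymptotic to a pair of transverse Lagrangian planes, presented as a two-valued graph $dF$ of a $\Z_{2}$-potential that branches along a codimension-two ellipsoid whose semi-axes encode the asymptotic angles. I would choose the family so that (i) the branching ellipsoid is \emph{exactly} $E_{\bs{h}}$ for every $t$, and (ii) the opening angle of the neck scales linearly with $t$, so that $t\to 0$ corresponds to the degeneration into a doubled copy of $\R^{n}$.

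First, I would recast Lawlor's explicit parameterization of the neck in our ellipsoidal coordinate $\tau(\bs{x})$ (the coordinate vanishing on $E_{\bs{h}}$ and increasing to infinity), obtaining an expression of the form
\begin{equation*}
F_{t}(\bs{x}) \;=\; \int_{\tau(\bs{x})}^{\infty}\Psi_{t}(u,\bs{x})\,\ud u,
\end{equation*}
where $\Psi_{t}$ is the transformed Lawlor integrand. Second, since the special Lagrangian equation linearizes to the Laplace equation at the pair-of-planes configuration, a Taylor expansion gives $t\,\Psi_{t}(u,\bs{x})\to \Psi_{0}(u,\bs{x})$ as $t\to 0$ with $\Psi_{0}$ harmonic in $\bs{x}$. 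The branching data and the quadratic harmonic asymptotic at infinity pin down $\Psi_{0}$ uniquely, and by direct comparison with the integrals in Theorem \ref{S_1_Thm_Main} this limiting integrand reproduces $f_{\bs{h}}$, giving pointwise convergence $tF_{t}\to f_{\bs{h}}$ on $\R^{n}\setminus E_{\bs{h}}$.

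To upgrade to $C^{\infty}_{loc}$ convergence, I would differentiate the integral representation under the integral sign: on any compact $K\subset \R^{n}\setminus E_{\bs{h}}$, both $t\Psi_{t}$ and all its $\bs{x}$-derivatives are continuous in $(t,u,\bs{x})$ near $t=0$ and dominated by a $t$-independent integrable function of $u$ (thanks to the uniform bound $\tau(\bs{x})\geq c_{K}>0$ on $K$ and the decay of $\Psi_{t}$ at $u=\infty$). Dominated convergence then promotes pointwise convergence to $C^{k}$ convergence for every $k$.

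The main obstacle is the explicit algebraic calibration in the second step: one must match Lawlor's angular parameters to $t$ and the $h_{i}$ so that the branching set is exactly $E_{\bs{h}}$ \emph{and} the rescaled limit $\Psi_{0}$ agrees with our integrand including the overall constant. This amounts to expanding Lawlor's formula in the angular parameters and verifying that, after the change of variables into our ellipsoidal coordinates, the $O(t^{-1})$ coefficient is precisely the integrand $\tfrac{1}{2}\prod_{j}h_{j}\,(u^{2}+h_{i}^{2})^{-1}S(u^{2})^{-1/2}$ that produced $a_{i}$ in Theorem \ref{S_1_Thm_Main}. Once this matching is carried out, the remainder of the argument is routine analysis.
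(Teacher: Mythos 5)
Your plan differs substantially from the paper's argument, and it contains a gap that would stop the proof in its tracks: the assertion that $F_t$ admits a one-dimensional integral representation $F_t(\bs{x})=\int_{\tau(\bs{x})}^{\infty}\Psi_t(u,\bs{x})\,\ud u$ in the ellipsoidal radial coordinate is not established, and is actually problematic. Lawlor's neck is parametrized by $(w,s)\in S^{n-1}\times\R$, and the level sets of $s$ project under $Pr_X$ to the quadrics $\sum x_i^2/m_i(s)^2=1$ with $m_i(s)^2=(c_i^{-2}+s^2)\cos^2(\psi_i(s))$ (and $m_n$ similarly with $\sin\psi_n$). These quadrics are confocal to $E_{\bs{h}}$ only in the $t\to\infty$ (degenerate) limit; for finite $t$ the differences $m_i(s)^2-m_j(s)^2$ are not constant, so Lawlor's natural foliation does \emph{not} coincide with the modified ellipsoidal coordinate surfaces. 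Consequently, there is no reason to expect a clean 1D integral in $\tau(\bs{x})$, and the "compare integrands" step in your plan has nothing to compare. Relatedly, your dominated-convergence argument for $C^\infty_{loc}$ convergence presupposes $t$-uniform bounds on all $\bs{x}$-derivatives of $t\Psi_t$ on compacta; but those bounds are exactly the hard analytic content, because near $E_{\bs{h}}$ the graph projection $Pr_X$ degenerates and the Hessian is recovered as $DPr_Y\circ(DPr_X)^{-1}$, which requires quantitative lower bounds on $\det(DPr_X)$.

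The paper's proof takes a structurally different route that sidesteps the need for an explicit integral formula for $F_t$. It works directly with the $(w,s)$ parametrization, splits $s$-space into an annular region and a near-branching region, and establishes $t$-uniform estimates on $(DPr_X)^{-1}$ (bounded away from the branching set, and $O(r^{-1/2})$ near it) together with convergence of $Pr_X,Pr_Y$ to explicit limits $Pr_X^\infty, Pr_Y^\infty$ with $O(t^{-2})$ error. This, plus the normalization $F_t\equiv 0$ on the ellipsoid, gives $C^\infty_{loc}$ subsequential convergence of $tF_{\bs{c}_t}$. The limit is then shown to be harmonic not by Taylor-expanding an integrand, but by expanding the scalar special-Lagrangian equation $\IIm\det(I+\II\,t^{-1}\mathrm{Hess}\,F)=0$ in powers of $t^{-2}$, whose $t\to\infty$ limit is $\Delta F=0$. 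Finally, the limit is identified with $f_{\bs{h}}$ not by "matching integrands" but by invoking Sun's uniqueness theorem (Proposition 5.1) together with the asymptotic expansion of Proposition 4.1, plus a boundedness argument near $E_{\bs{h}}$ to kill the ambiguity coming from $f_0$. Your plan has the right spirit (rescale, extract a harmonic limit, identify it by its branching set and asymptotics), and your appeal to uniqueness at the end is the correct final step, but the two genuinely hard ingredients -- the Jacobian estimates near the branching set and the mechanism producing harmonicity of the limit -- are missing or replaced by unjustified assertions.
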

		
\subsection*{Acknowledgement}
The author is grateful to his Ph.D advisor, Prof. LeBrun, as well as to Prof. Donaldson for their valuable discussions and encouragement. He would also like to thank Jingrui Cheng, Dylan Galt, Siqi He, and Yusen Xia for fruitful discussions and helpful comments. This work is partially supported by Simons Foundation.

\section{Ellipsoidal coordinates on $\R^{n}$}\label{S_2}%%%%%%%%%%%%
	Our construction is motivated by ellipsoidal harmonics in classical electromagnetism. These functions are solutions to the Laplacian equation on $\R^{n}$ under ellipsoidal coordinates, a coordinate system that is orthogonal and well-defined on an open dense subset in $\R^{n}$. The key to our construction is the discovery that a variant of the ellipsoidal coordinates can be extended to orthogonal coordinates on a double cover of $\R^{n}$ with the branching set being an ellipsoid.\\
	
	In this section, we will give a brief review of the ellipsoidal coordinates and their variant in $\R^{n}, n\geq 3$. Our main reference for this section is \cite{Dass12}. Consider a family of hypersurfaces $C_{i},1\leq i\leq n$ defined by
			\begin{equation}\label{S_2_Eqn_Linear}
				C_{i}:\frac{x_{1}^{2}}{b_{1}^{2}-\lambda_{i}}+\cdots +\frac{x_{n}^{2}}{b_{n}^{2}-\lambda_{i}}=1,
			\end{equation}
		where
			\begin{equation*}
				\lambda_{n}<b_{n}^{2}<\lambda_{n-1}<b_{n-1}^{2}<\cdots<\lambda_{1}<b_{1}^{2}.
			\end{equation*}
		Direct computation shows that the normal vector
			\begin{equation*}
				N_{i}=(\frac{2x_{1}}{b_{1}^{2}-\lambda_{i}},\cdots,\frac{2x_{n}}{b_{n}^{2}-\lambda_{i}}),
			\end{equation*}
		are orthogonal to each other. Solving the Equation \ref{S_2_Eqn_Linear} by induction on the dimension $n$, we conclude that
			\begin{equation}\label{S_2_Eqn_Coor_1}
				x_{i}^{2}=\frac{\prod_{j}(b_{i}^{2}-\lambda_{j})}{\prod_{k\neq i}(b_{i}^{2}-b_{k}^{2})}.
			\end{equation}
		The parameters $(\lambda_{i})$ define an orthogonal coordinate system in $\R^{n}$, which is known as ellipsoidal coordinates. In this coordinate, the Euclidean metric is 
			\begin{equation}\label{S_3_Eqn_Met_1}
				\sum_{i=1}^{n}\frac{\prod_{j\neq i}(\lambda_{j}-\lambda_{i})}{4\prod_{j}(b_{j}^{2}-\lambda_{i})}\ud \lambda_{i}^{2}.
			\end{equation}
		This follows from an algebraic lemma, which will be used repetitively throughout this paper.
			\begin{lemma}\label{S_2_Lemma_Com}
				Let $\sigma_{i,k}(\bs{y}), 1\leq i\leq n-1$ be the $k$-th symmetric polynomial of $n-2$ variables
					\begin{equation*}
						y_{1},\cdots, \hat{y_{i}}, \cdots, y_{n-1},
					\end{equation*}
				then 
					\begin{equation*}
						\sum_{i=1}^{n-1}\frac{(-1)^{n-2-k}y_{i}^{l}\sigma_{i,n-2-k}(\bs{y})}{\prod_{j\neq i}(y_{i}-y_{j})}=\delta_{lk}.
					\end{equation*}
			\end{lemma}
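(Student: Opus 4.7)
The plan is to recognize the displayed sum as the coefficient of $y^{k}$ in the Lagrange interpolation of the monomial $y^{l}$ at the nodes $y_{1},\ldots,y_{n-1}$. Implicit in the statement is that $k,l\in\{0,1,\ldots,n-2\}$; in this range, interpolation reproduces polynomials exactly and the identity falls out immediately upon extracting coefficients.

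First I would introduce the Lagrange basis polynomials
\[
L_{i}(y) \;=\; \prod_{j\neq i}\frac{y - y_{j}}{y_{i} - y_{j}}, \qquad 1\leq i\leq n-1,
\]
and expand their numerators using the standard relation between a product of linear factors and the elementary symmetric polynomials of its roots,
\[
\prod_{j\neq i}(y - y_{j}) \;=\; \sum_{k=0}^{n-2}(-1)^{n-2-k}\,\sigma_{i,n-2-k}(\bs{y})\,y^{k}.
\]
This identifies the ratio $(-1)^{n-2-k}\sigma_{i,n-2-k}(\bs{y})/\prod_{j\neq i}(y_{i}-y_{j})$ as the coefficient of $y^{k}$ in $L_{i}(y)$, so that the left-hand side of the lemma is precisely $[y^{k}]\sum_{i=1}^{n-1}y_{i}^{l}L_{i}(y)$.

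Next I would invoke Lagrange interpolation at the $n-1$ distinct nodes $y_{1},\ldots,y_{n-1}$: because $y^{l}$ is a polynomial of degree $l\leq n-2$, the unique interpolant through the values $\{y_{i}^{l}\}$ equals $y^{l}$ on the nose, giving
\[
y^{l} \;=\; \sum_{i=1}^{n-1} y_{i}^{l}\,L_{i}(y).
\]
Reading off the coefficient of $y^{k}$ on both sides produces exactly the identity stated in the lemma.

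The argument has no serious obstacle; the only point requiring care is tracking the sign $(-1)^{n-2-k}$ arising from the elementary symmetric expansion. With this accounted for, the lemma becomes a one-line consequence of the uniqueness of Lagrange interpolation, and no estimates or further algebraic manipulations are needed.
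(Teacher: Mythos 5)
Your proof is correct and is essentially the same argument as the paper's: the paper sets up a Vandermonde matrix $V$ and its inverse $L$, uses $VL=\mathrm{Id}$ to identify the columns of $L$ with the Lagrange basis $L_i(y)=\prod_{j\neq i}(y-y_j)/(y_i-y_j)$, expands $L_i$ via elementary symmetric polynomials to read off the coefficients, and then uses $LV=\mathrm{Id}$ — which is exactly the statement that the degree-$\leq n-2$ interpolant of $y^l$ at the nodes is $y^l$ itself. You simply bypass the matrix packaging by defining the Lagrange basis directly and appealing to uniqueness of interpolation, which is cleaner but not a genuinely different route.
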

			\begin{proof}
				Let $V$ be the Vandermonde matrix
					\begin{equation*}
							\begin{bmatrix}
								1 & y_{1} &\cdots & y_{1}^{n-2}\\
								1 & y_{2} & \cdots & y_{2}^{n-2}\\
								\vdots &\vdots  &\ddots  & \vdots\\
								1 & y_{n-1}& \cdots & y_{n-1}^{n-2}
							\end{bmatrix},
					\end{equation*}
				and let $L$ be its inverse
					\begin{equation*}
							\begin{bmatrix}
								L_{0,1}&\cdots & L_{0,n-1}\\
								\vdots &\ddots & \vdots\\
								L_{n-2,1} &\cdots & L_{n-2,n-1}
							\end{bmatrix}.
					\end{equation*}
				Define $L_{i}(y)=\sum_{j=0}^{n-2}L_{j,i}y^{j}$, then the equality $V\cdot L=Id$ yields to
					\begin{equation*}
						L_{i}(y_{k})=
							\begin{cases}
								1, i=k\\
								0, i\neq k
							\end{cases}.
					\end{equation*}
				Lagrangian interpolation then implies
					\begin{equation*}
						L_{i}(y)=\frac{	f(y)}{(y-y_{i})f'(y_{i})},
					\end{equation*}
				where $f(y)=\prod_{j=1}^{n-1}(y-y_{j})$. As a consequence
					\begin{equation*}
						L_{k,i}=\frac{(-1)^{n-2-k}\sigma_{i,n-2-k}}{\prod_{j\neq i}(y_{i}-y_{j})}.
					\end{equation*}
				The lemma follows from the equality
					\begin{equation*}
						L\cdot V=Id,
					\end{equation*}
				then
					\begin{equation*}
						\sum_{i=1}^{n-1}\frac{(-1)^{n-2-k}\sigma_{i,n-2-k}y_{i}^{l}}{\prod_{j\neq i}(y_{i}-y_{j})}=\delta_{lk}.
					\end{equation*}
			\end{proof}
	Take the derivatives with respect to $\lambda_{j}$ we get
		\begin{equation*}
			\begin{split}
				\sum_{i=1}^{n}\frac{\p x_{i}}{\p \lambda_{j}}\cdot \frac{\p x_{i}}{\p \lambda_{j}}&=\sum_{i=1}^{n}\frac{\prod_{k\neq j}\prod_{l\neq j}(b_{i}^{2}-\lambda_{k})(b_{i}^{2}-\lambda_{l})}{4\prod_{k\neq i}(b_{i}^{2}-b_{k}^{2})\prod_{l}(b_{i}^{2}-\lambda_{l})}\\
				&=\sum_{i=1}^{n}\frac{\prod_{k\neq j}\prod_{l\neq i}(b_{i}^{2}-\lambda_{k})(b_{l}^{2}-\lambda_{j})}{4\prod_{k\neq i}(b_{i}^{2}-b_{k}^{2})\prod_{l}(b_{l}^{2}-\lambda_{j})}
			\end{split}
		\end{equation*}
		Expand the equation, we obtain
		\begin{equation*}
			\frac{1}{4\prod_{l}(b_{l}^{2}-\lambda_{j})} \sum_{i=1}^{n}\frac{\sum_{p=0}^{n-1}\sum_{q=0}^{n-1}(-1)^{n-1-p+q}b_{i}^{2p}\lambda_{j}^{q}\sigma_{j,n-1-p}(\bs{\lambda})\sigma_{i,n-1-q}(\bs{b})}{\prod_{k\neq i}(b_{i}^{2}-b_{k}^{2})}.
		\end{equation*}
		Here, $\sigma_{i,k}(\bs{\lambda}), \sigma_{i,k}(\bs{b})$ are the $k$-th symmetric polynomials of
			\begin{equation*}
				\begin{split}
					&\lambda_{1},\cdots, \hat{\lambda}_{i}, \cdots, \lambda_{n};\\
					&b_{1}^{2},\cdots, \hat{b_{i}^{2}}, \cdots, b_{n}^{2}.
				\end{split}
			\end{equation*}
		Apply Lemma \ref{S_2_Lemma_Com}, we obtain Equation \ref{S_2_Eqn_Coor_1}
			\begin{equation*}
				\sum_{i=1}^{n}\frac{\p x_{i}}{\p \lambda_{j}}\cdot \frac{\p x_{i}}{\p \lambda_{j}}=\frac{\sum_{p=0}^{n-1}(-\lambda_{j})^{p}\sigma_{j,n-1-p}(\bs{\lambda})}{4\prod_{l}(b_{l}^{2}-\lambda_{j})}=\frac{\prod_{l\neq j}(\lambda_{l}-\lambda_{j})}{4\prod_{l}(b_{l}^{2}-\lambda_{j})}.
			\end{equation*}
\subsection{Branched cover and modified ellipsoidal coordinate}\label{S_2_1}
	The key observation is that we can slightly modify the parameters to obtain orthogonal coordinates on a double cover of $\R^{n}$, where the branch locus is an ellipsoid in $\R^{n-1}$. To begin with, let
			\begin{equation*}
				\begin{split}
					&h_{i}^{2}=b_{i}^{2}-b_{n}^{2}, i<n;\\
					&\mu_{i}^{2}=\lambda_{i}-b_{n}^{2}, i<n;\\
					&\mu_{n}^{2}=b_{n}^{2}-\lambda_{n}.
				\end{split}
			\end{equation*}
		Here, $h_{i+1}^{2}<\mu_{i}^{2}<h_{i}^{2}, i<n-1$, and $0<\mu_{n-1}^{2}<h_{n-1}^{2},\mu_{n}^{2}>0$. After the change of parameters, Equation \ref{S_2_Eqn_Coor_1} becomes:
			\begin{equation}\label{S_2_Eqn_Coor_2}
				\begin{split}
					&x_{i}^{2}=\frac{(h_{i}^{2}+\mu_{n}^{2})\prod_{j<n}(h_{i}^{2}-\mu_{j}^{2})}{h_{i}^{2}\prod_{j\neq i}(h_{i}^{2}-h_{j}^{2})}, i<n,\\
					&x_{n}^{2}=\frac{\prod_{j}\mu_{j}^{2}}{\prod_{j<n}h_{j}^{2}}.
				\end{split}
			\end{equation}
	
	In this coordinate, the Riemannian metric becomes
		\begin{equation}\label{S_2_Eqn_Met_3}
			\frac{\prod_{j=1}^{n-1}(\mu_{j}^{2}+\mu_{n}^{2})}{\prod_{j=1}^{n-1}(h_{j}^{2}+\mu_{n}^{2})}\ud \mu_{n}^{2}+\sum_{i=1}^{n-1}\frac{(\mu_{n}^{2}+\mu_{i}^{2})\prod_{j\neq i,j\leq n-1}(\mu_{j}^{2}-\mu_{i}^{2})}{\prod_{j=1}^{n-1}(h_{j}^{2}-\mu_{i}^{2})}\ud \mu_{i}^{2}
		\end{equation}
	If we further let
		\begin{equation*}
			\mu_{i}^{2}=h_{i+1}^{2}+(h_{i}^{2}-h_{i+1}^{2})\sin(\theta_{i})^{2}, i<n-1,
		\end{equation*}
		the expression further reduces to
		\begin{equation}
		\begin{split}\label{S_2_Eqn_Coor_3}
			&x_{i}=\frac{\cos(\theta_{i})\sin(\theta_{i-1})\sqrt{(h_{i}^{2}+\mu_{n}^{2})(h_{i}^{2}-\mu_{n-1}^{2})}}{h_{i}\sqrt{\prod_{j<i-1}(h_{j}^{2}-h_{i}^{2})}\sqrt{\prod_{j>i}(h_{i}^{2}-h_{j}^{2})}}\sqrt{\prod_{j>i}(h_{i}^{2}-\mu_{j}^{2})}\sqrt{\prod_{j<i-1}(\mu_{j}^{2}-h_{j}^{2})},\\
					&x_{n}=\frac{\prod_{j}\mu_{j}}{\prod_{j<n}h_{j}}.
		\end{split}	
		\end{equation}
		\begin{remark}\label{S_2_Rmk_Coor}
			The above coordinates map can be extended to the cases where multiple $h_{i}$ are equal. 
		\end{remark}
		\begin{example}\label{S_2_Ex_Coor_1}
			When $n=3$, the equation \ref{S_2_Eqn_Coor_3} becomes
				\begin{equation*}
				\begin{split}
					&x_{1}=\frac{\cos(\theta_{1})\sqrt{(h_{1}^{2}+\mu_{3}^{2})(h_{1}^{2}-\mu_{2}^{2})}}{h_{1}};\\
					&x_{2}=\frac{\sin(\theta_{1})\sqrt{(h_{2}^{2}+\mu_{3}^{2})(h_{2}^{2}-\mu_{2}^{2})}}{h_{2}};\\
					&x_{3}=\frac{\mu_{3}\mu_{2}\sqrt{h_{2}^{2}+(h_{1}^{2}-h_{2}^{2})\cos(\theta_{1})^{2}}}{h_{1}h_{2}}.
				\end{split}
				\end{equation*}
			If we let $h_{1}\to h_{2}=1$ these coordinates degenerate to oblate-spherical coordinates
				\begin{equation*}
				\begin{split}
					&x_{1}=\cos(\theta_{1})\sqrt{(1+\mu_{3}^{2})(1-\mu_{2}^{2})};\\
					&x_{2}=\sin(\theta_{1})\sqrt{(1+\mu_{3}^{2})(1-\mu_{2}^{2})};\\
					&x_{3}=\mu_{3}\mu_{2}.
				\end{split}
				\end{equation*}
		\end{example}
	
	If we view $(\theta_{1},\cdots, \theta_{n-2})$ as spherical coordinates $\Theta$ that parametrize an ellipsoid, then the Equation \ref{S_2_Eqn_Coor_3} defines a smooth map
		\begin{equation*}
			\bs{x}(\Theta, \mu_{n-1}, \mu_{n}): S^{n-2}\times (-h_{n-1},h_{n-1})\times (-\infty,\infty)\to \R^{n}.
		\end{equation*}
		\begin{definition}\label{S_2_Defn_CoorMod}
			We call $\bs{x}(\Theta,\mu_{n-1},\mu_{n})$ a \textbf{modified ellipsoidal coordinate}, here the parameters $h_{i}$ are allowed to satisfy $0<h_{i+1}\leq h_{i}$.
		\end{definition}
	It is easy to see that $\bs{x}$ is generically a $2$-$1$ map, such that 
		\begin{equation*}
			\bs{x}(\Theta, \mu_{n-1}, \mu_{n})=\bs{x}(\Theta,-\mu_{n-1},-\mu_{n}).
		\end{equation*}
	Moreover, we will have the following proposition:
		\begin{prop}\label{S_2_Prop_Coor}
			The coordinates map $\bs{x}$ defines an orthogonal coordinate system on the double cover over $\R^{n}$, with a branching set
				\begin{equation*}
					E_{\bs{h}}: \frac{x_{1}^{2}}{h_{1}^{2}}+\cdots+\frac{x_{n-1}^{2}}{h_{n-1}^{2}}=1,
				\end{equation*}
			which corresponds to $\{\mu_{n}=\mu_{n-1}=0\}$. In particular, if we let
				\begin{equation*}
					z=\frac{\prod_{j=1}^{n-2}\mu_{j}}{2\prod_{i=1}^{n-1}h_{i}}\big(\mu_{n}^{2}-\mu_{n-1}^{2}+2\sqrt{-1}\mu_{n-1}\mu_{n}\big),
				\end{equation*}
			then $z=\zeta+O(r^{2})$, where $\zeta$ is the inverse of the exponential map from the tubular neighborhood to the normal bundle. 
		\end{prop}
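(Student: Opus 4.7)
The plan is to establish orthogonality and the branched-cover structure first, and then Taylor-expand near the branching set to identify $z$ with $\zeta$.

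Orthogonality is inherited from the classical ellipsoidal coordinates. The substitutions $\mu_{i}^{2}=\lambda_{i}-b_{n}^{2}$ for $i<n$, $\mu_{n}^{2}=b_{n}^{2}-\lambda_{n}$, and $\mu_{i}^{2}=h_{i+1}^{2}+(h_{i}^{2}-h_{i+1}^{2})\sin^{2}\theta_{i}$ for $i<n-1$ are diagonal (each new variable depends on a single old variable), so the already-verified orthogonal system $(\lambda_{i})$ remains orthogonal, and the metric expressions (2.10) and (2.11) follow by the chain rule. Extending the substitution to allow $\mu_{n-1}\in(-h_{n-1},h_{n-1})$ and $\mu_{n}\in\R$ yields a smooth map $\bs{x}(\Theta,\mu_{n-1},\mu_{n})$ that is invariant under $(\mu_{n-1},\mu_{n})\mapsto(-\mu_{n-1},-\mu_{n})$ by inspection of (2.9), hence generically $2$-to-$1$.

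Setting $\mu_{n-1}=\mu_{n}=0$ in (2.8)–(2.9) gives $x_{n}=0$ and $x_{i}^{2}=h_{i}^{2}\prod_{j<n-1}(h_{i}^{2}-\mu_{j}^{2})/\prod_{k\neq i}(h_{i}^{2}-h_{k}^{2})$. A residue argument applied to $R(y)=\prod_{j<n-1}(y-\mu_{j}^{2})/[y\prod_{k<n}(y-h_{k}^{2})]$, whose residues sum to zero since the denominator exceeds the numerator in degree by two, yields $\sum_{i<n} x_{i}^{2}/h_{i}^{2}=1$, so the branch set is precisely $E_{\bs{h}}$. Moreover, (2.9) shows that for $i<n$ the function $x_{i}-x_{i}^{(0)}$ depends smoothly on $\mu_{n-1}^{2},\mu_{n}^{2}$, while $x_{n}$ equals $\mu_{n-1}\mu_{n}$ times a smooth function of $\mu_{n-1}^{2},\mu_{n}^{2}$. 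This is exactly the local model of a double cover of $\R^{n}$ branched over a codimension-$2$ submanifold, proving that $\bs{x}$ is the branched double cover.

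For the asymptotic identification, I would Taylor expand (2.9) at a branch point $(\Theta_{0},0,0)$ in $(s,t):=(\mu_{n-1},\mu_{n})$. For $i<n$, one finds $x_{i}=x_{i}^{(0)}\bigl(1+(t^{2}-s^{2})/(2h_{i}^{2})\bigr)+O((s^{2}+t^{2})^{2})$, so the displacement in $\R^{n-1}$ is $\frac{t^{2}-s^{2}}{2}(x_{i}^{(0)}/h_{i}^{2})_{i<n}$ to leading order, a multiple of the outward unit normal $\nu$ of $E_{\bs{h}}$ in $\R^{n-1}$. Setting $K=\prod_{j<n-1}\mu_{j}^{(0)}/\prod_{i<n}h_{i}$, one also obtains $x_{n}=Kst+O(|(s,t)|^{4})$. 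The same residue identity applied to $R(y)$ computes $\sum_{i<n}(x_{i}^{(0)})^{2}/h_{i}^{4}=K^{2}$, so the Euclidean length of the unnormalized normal $(x_{i}^{(0)}/h_{i}^{2})_{i<n}$ is exactly $K$. Hence the full displacement into the normal $2$-plane equals $\frac{K(t^{2}-s^{2})}{2}\nu+Kst\,e_{n}$. Identifying $N\simeq\CC$ via the co-orientation, $\nu\mapsto 1$, $e_{n}\mapsto\sqrt{-1}$, this becomes $\frac{K}{2}(t+\sqrt{-1}\,s)^{2}=z$. Since $\zeta$ agrees with the orthogonal projection onto the normal $2$-plane to order $r^{2}$, we conclude $z=\zeta+O(r^{2})$.

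The main obstacle is the residue identity $\sum_{i<n}(x_{i}^{(0)})^{2}/h_{i}^{4}=K^{2}$, which reconciles the magnitudes of the real and imaginary parts of $z$, together with the bookkeeping to ensure the co-orientation convention on $N$ identifies the pair $(\nu,e_{n})$ with $(1,\sqrt{-1})$ and not with the opposite sign.
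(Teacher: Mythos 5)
Your approach is correct in outline but genuinely different from the paper's, and it contains one small slip. The paper derives the branch-locus identity $\sum_{i<n}x_i^2/h_i^2=1$ and the formula for $\zeta$ algebraically: it uses the Vandermonde identity (Lemma \ref{S_2_Lemma_Com}) via Lemma \ref{S_2_Lemma_Coor_4} to write $\sum x_i^2/h_i^2 = 1-\sigma_{n-1}(\bs\mu)/\sigma_{n-1}(\bs h)+\sigma_{n-2}(\bs h)\sigma_n(\bs\mu)/\sigma_{n-1}(\bs h)^2$, takes the defining functions $F=\sum x_i^2/h_i^2-1$ and $x_n$, and then reads off $F/|\nabla F|+\sqrt{-1}\,x_n=z+O(r^2)$ directly from this identity after computing $|\nabla F|=2K+O(r)$. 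You instead Taylor-expand the coordinate functions $x_i(\Theta,s,t)$ around $(s,t)=(0,0)$ and reassemble the leading quadratic displacement as a complex square. Both routes are legitimate; the defining-function method automatically encodes the signed normal distance, so it avoids having to separately observe that the tangential components of the displacement are $O(r^2)$, while your expansion is more geometrically transparent and makes the appearance of $(t+\sqrt{-1}s)^2$ explicit. Your treatment of the local branched-cover structure ($x_i-x_i^{(0)}$ even in $(s,t)$, $x_n=Kst\cdot(\text{smooth in }s^2,t^2)$) is also a reasonable and explicit way to make the ``double cover branched over a codimension-two locus'' claim precise, which the paper mostly leaves to the discussion preceding the proposition.

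The slip: you apply a single residue argument to $R(y)=\prod_{j<n-1}(y-\mu_j^2)/\bigl[y\prod_{k<n}(y-h_k^2)\bigr]$ to conclude \emph{both} $\sum_{i<n}x_i^2/h_i^2=1$ and $\sum_{i<n}(x_i^{(0)})^2/h_i^4=K^2$, but this $R$ only gives the second one. With the extra factor of $y$ in the denominator, $\operatorname{Res}_{h_i^2}R=(x_i^{(0)})^2/h_i^4$ and $\operatorname{Res}_0 R=-K^2$, and degree-difference two forces the sum of residues to vanish, yielding $\sum(x_i^{(0)})^2/h_i^4=K^2$. For $\sum x_i^2/h_i^2=1$ you should use $\tilde R(y)=\prod_{j<n-1}(y-\mu_j^2)/\prod_{k<n}(y-h_k^2)$, which has degree difference one: the sum of its finite residues equals the leading coefficient $1$, and $\operatorname{Res}_{h_i^2}\tilde R=(x_i^{(0)})^2/h_i^2$. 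With that correction your proof goes through.
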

	To begin, we first consider the case when $h_{i+1}<h_{i}$ and then derive a formula that can be extended to $h_{i+1}\leq h_{i}$. We define $\sigma_{k}(\bs{h}), \sigma_{j,k}(\bs{h}), \sigma_{k}(\bs{\mu})$ as the symmetric polynomials with respect to the multiples
		\begin{equation*}
		\begin{split}
			&h_{1}^{2},\cdots,h_{n-1}^{2},\\
			&\mu_{1}^{2},\cdots, \mu_{n-1}^{2},-\mu_{n}^{2}.
		\end{split}
		\end{equation*}
	Then we will have the following Lemma
		\begin{lemma}\label{S_2_Lemma_Coor_4}
			The symmetric polynomials $\sigma_{k}(\bs{\mu})$ are linear combinations of constants and $x_{i}^{2}$. In particular, we have
				\begin{equation}\label{S_2_Eqn_Coor_4}
				\begin{split}
					&\sigma_{k}(\bs{\mu})=\sigma_{k}(\bs{h})-\sigma_{k-1}(\bs{h})x_{n}^{2}-\sum_{i=1}^{n-1}\sigma_{i,k-1}(\bs{h})x_{i}^{2}, 1\leq k \leq n-1;\\
					&\sigma_{n}(\bs{\mu})=-\sigma_{n-1}(\bs{h})x_{n}^{2}.
				\end{split}
				\end{equation}
		\end{lemma}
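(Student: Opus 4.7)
My plan is to reconstruct the polynomial whose coefficients are the $\sigma_{k}(\bs{\mu})$ via Lagrange interpolation at the points where Equation \ref{S_2_Eqn_Coor_2} gives us clean evaluations, namely at $y=h_{i}^{2}$ and $y=0$, and then compare coefficients with $\sigma_{k}(\bs{h})$.

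First, I would introduce the monic polynomial
\begin{equation*}
P(y):=\prod_{j=1}^{n-1}(y-\mu_{j}^{2})\cdot(y+\mu_{n}^{2})=\sum_{k=0}^{n}(-1)^{k}\sigma_{k}(\bs{\mu})y^{n-k},
\end{equation*}
so that reading off the $\sigma_{k}(\bs{\mu})$ amounts to expanding $P$. The two key evaluations, read straight from Equation \ref{S_2_Eqn_Coor_2}, are
\begin{equation*}
P(h_{i}^{2})=x_{i}^{2}h_{i}^{2}\prod_{j\neq i}(h_{i}^{2}-h_{j}^{2}),\qquad P(0)=(-1)^{n-1}x_{n}^{2}\prod_{j<n}h_{j}^{2}.
\end{equation*}

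Next, I would observe that $R(y):=y\prod_{i=1}^{n-1}(y-h_{i}^{2})$ is also monic of degree $n$, so $P(y)-R(y)$ has degree at most $n-1$ and is determined by its values at the $n$ points $\{0,h_{1}^{2},\dots,h_{n-1}^{2}\}$. Since $R$ vanishes at all of these points, Lagrange interpolation at this node set yields
\begin{equation*}
P(y)-R(y)=y\sum_{i=1}^{n-1}x_{i}^{2}\prod_{k\neq i}(y-h_{k}^{2})+x_{n}^{2}\prod_{k=1}^{n-1}(y-h_{k}^{2}),
\end{equation*}
where I have used $\prod_{j<n}\mu_{j}^{2}\cdot\mu_{n}^{2}=x_{n}^{2}\prod_{j<n}h_{j}^{2}$ to absorb the $P(0)$ contribution. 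In other words, Lemma \ref{S_2_Lemma_Com} (applied to the variables $h_{i}^{2}$ together with $0$) provides exactly the interpolating basis needed.

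Finally, I would expand each of the three pieces on the right-hand side as a polynomial in $y$ using the symmetric polynomials $\sigma_{l}(\bs{h})$ and $\sigma_{i,l}(\bs{h})$, and compare coefficients of $y^{n-k}$ on both sides. For $1\leq k\leq n-1$ this produces
\begin{equation*}
(-1)^{k}\sigma_{k}(\bs{\mu})=(-1)^{k}\sigma_{k}(\bs{h})+(-1)^{k-1}x_{n}^{2}\sigma_{k-1}(\bs{h})+(-1)^{k-1}\sum_{i=1}^{n-1}x_{i}^{2}\sigma_{i,k-1}(\bs{h}),
\end{equation*}
and the constant-term comparison gives $\sigma_{n}(\bs{\mu})=-\sigma_{n-1}(\bs{h})x_{n}^{2}$, exactly the two claimed identities. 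The only real obstacle is bookkeeping: keeping track of signs and of the distinction between $\sigma_{k}(\bs{h})$ (in $n-1$ variables) and $\sigma_{i,k}(\bs{h})$ (in $n-2$ variables). There is no analytic difficulty, and the identity extends to the case $h_{i+1}\leq h_{i}$ by continuity, consistent with Remark \ref{S_2_Rmk_Coor}.
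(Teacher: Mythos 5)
Your proof is correct and takes a cleaner, more conceptual route than the paper's. The paper works directly from the coordinate formula (Equation \ref{S_2_Eqn_Coor_1}): after the change of variables it rewrites $x_i^2$ as a sum of $h_i^{2k}$ times combinations of $\sigma$'s, and then applies Lemma \ref{S_2_Lemma_Com} as a Vandermonde inversion to solve the resulting linear system for the $\sigma_k(\bs{\mu})$. You instead package the unknowns as the coefficients of the degree-$n$ monic polynomial $P(y)$ with roots $\mu_1^2,\dots,\mu_{n-1}^2,-\mu_n^2$, read off the $n$ evaluations $P(h_i^2)$ and $P(0)$ straight from Equation \ref{S_2_Eqn_Coor_2}, and reconstruct $P-R$ by Lagrange interpolation at those $n$ nodes, where $R=y\prod(y-h_j^2)$ is the obvious monic reference so that $P-R$ has degree $\le n-1$. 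Both arguments are ``Lagrange/Vandermonde,'' and they are computationally equivalent, but your version makes the interpolation data and node set explicit, so the sign and index bookkeeping is more transparent, and it makes clear exactly why the answer lands on the basis $\{1,x_1^2,\dots,x_n^2\}$. One small imprecision worth fixing: you credit Lemma \ref{S_2_Lemma_Com} with ``providing the interpolating basis,'' but as stated that lemma is for $n-1$ nodes, while you interpolate at the $n$ nodes $\{0,h_1^2,\dots,h_{n-1}^2\}$; your argument does not actually need the lemma (you derive the Lagrange basis from scratch), so you could simply drop the reference or note that you are using the $n$-node version.
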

		\begin{proof}
			Use the fact that
				\begin{equation*}
					\sum_{k=0}^{n-1}(-1)^{n-1-k}h_{i}^{2k}\sigma_{n-1-k}(\bs{h})=0,
				\end{equation*}
			then Equation \ref{S_2_Eqn_Coor_1} becomes
				\begin{equation*}
				\begin{split}
					&x_{i}^{2}=\frac{\sum_{k=0}^{n-2}(-1)^{n-1-k}h_{i}^{2k}\bigg(\sigma_{n-1-k}(\bs{\mu})-\sigma_{n-1-k}(\bs{h})-\frac{\sigma_{n-2-k}(\bs{h})\sigma_{n}(\bs{\mu})}{\sigma_{n-1}(\bs{h})}\bigg)}{\Pi_{j\neq i}(h_{i}^{2}-h_{j}^{2})},1\leq i\leq n-1\\
					&x_{n}^{2}=-\frac{\sigma_{n}(\bs{\mu})}{\sigma_{n-1}(\bs{h})}.
				\end{split}
				\end{equation*}
            Applying Lemma \ref{S_2_Lemma_Com}, we prove the lemma.
		\end{proof}

		\begin{proof}[Proof of Proposition \ref{S_2_Prop_Coor}]
			The above lemma implies
				\begin{equation}\label{S_2_Eqn_DefineE}
					\sum_{i=1}^{n-1}\frac{x_{i}^{2}}{h_{i}^{2}}=1-\frac{\sigma_{n-1}(\bs{\mu})}{\sigma_{n-1}(\bs{h})}+\frac{\sigma_{n-2}(\bs{h})\sigma_{n}(\bs{\mu})}{\sigma_{n-1}(\bs{h})^{2}}.
				\end{equation}
			When $\mu_{n}=\mu_{n-1}=0$, $\sigma_{n-1}(\bs{\mu})=\sigma_{n}(\bs{\mu})=0$. Therefore, the branching set for the coordinates map $\bs{x}(\Theta,\mu_{n-1},\mu_{n})$ is $E_{\bs{h}}$.\\
			
			The defining functions for $E_{\bs{h}}$ are $F=\sum_{i=1}^{n-1}\frac{x_{i}^{2}}{h_{i}^{2}}-1$ and $x_{n}$. It follows immediately that
				\begin{equation*}
					\frac{F}{|\nabla F|}+\sqrt{-1}x_{n}=\zeta+O(r^{2}),
				\end{equation*}
			where $r$ is a distance function from a small neighborhood of $E_{\bs{h}}$ to $E_{\bs{h}}$. The norm $|\nabla F|=2(\sum_{i=1}^{n-1}\frac{x_{i}^{2}}{h_{i}^{-4}})^{1/2}$ admits an expansion near $E_{\bs{h}}$
				\begin{equation*}
					|\nabla F|=\frac{2\prod_{j=1}^{n-2}\mu_{j}}{\prod_{i=1}^{n-1}h_{i}}+O(r)	
				\end{equation*}
			On the other hand, notice that $\mu_{n}^{2}+\mu_{n-1}^{2}$ is uniformly equivalent to $r$. Then the Equation \ref{S_2_Eqn_DefineE} implies
				\begin{equation*}
					\frac{F}{|\nabla F|}=\frac{\prod_{j=1}^{n-2}\mu_{j}}{2\prod_{i=1}^{n-1}h_{i}}\big(\mu_{n}^{2}-\mu_{n-1}^{2}\big)+O(r^{2}).
				\end{equation*}
			Therefore, $z$ is a coordinate in the normal direction, such that
				\begin{equation*}
					\zeta=z+O(r^{2}).
				\end{equation*}
			Here, $z^{1/2}$ can be written as
				\begin{equation*}
					\frac{\prod_{j=1}^{n-2}\sqrt{\mu_{j}}}{\sqrt{2}\prod_{i=1}^{n-1}\sqrt{h_{i}}}(\mu_{n}+\sqrt{-1}\mu_{n-1}).
				\end{equation*}
			In the end, this formula can be extended to the cases $h_{i+1}\leq h_{i}$.
			\end{proof}

\section{Construction}\label{S_3}%%%%%%%%%%%%
	Our strategy for the construction is to find solutions to the Laplacian equation, which takes the form
		\begin{equation*}
			f=f(\Theta,\mu_{n-1},\mu_{n}),
		\end{equation*}
		such that $f(\Theta,\mu_{n-1},\mu_{n})=-f(\Theta,-\mu_{n-1},-\mu_{n})$ and that
		\begin{equation*}
			f=\RRe{\tilde{B}(\Theta)(\mu_{n}+\sqrt{-1}\mu_{n-1})^{3}}+O(|z|^{5/2})
		\end{equation*}
		near $E_{\bs{h}}$. We will first derive a formula for the non-degenerate $\Z_{2}$-harmonic function when $h_{i+1}<h_{i}$ and extend it to the cases $h_{i+1}\leq h_{i}$. 
	
	\subsection{Separation of Variables}\label{S_3_1}%%%%%%%%
		We will use Equation \ref{S_2_Eqn_Met_3} to write down the Laplacian operator under $\mu_{i}$'s coordinates. Let $S(y)=\prod_{j=1}^{n-1}(h_{j}^{2}+y)$, then the Laplacian operator $\Delta$ is
			\begin{equation*}
				\frac{1}{\prod_{k=1}^{n-1}(\mu_{n}^{2}+\mu_{k}^{2})}\mathscr{L}_{n}-\sum_{j=1}^{n-1}\frac{1}{(\mu_{n}^{2}+\mu_{j}^{2})\prod_{k\neq j}(\mu_{k}^{2}-\mu_{j}^{2})}\mathscr{L}_{j},
			\end{equation*}
		where 
			\begin{equation*}
				\begin{split}
					&\mathscr{L}_{n}=S(\mu_{n}^{2})\frac{\p^{2}}{\p \mu_{n}^{2}}+\mu_{n}S'(\mu_{n}^{2})\frac{\p}{\p \mu_{n}};\\
					&\mathscr{L}_{j}=S(-\mu_{j}^{2})\frac{\p^{2}}{\p \mu_{j}^{2}}-\mu_{n}S'(-\mu_{j}^{2})\frac{\p}{\p \mu_{j}}.
				\end{split}
			\end{equation*}
		Let $Q(y)=\sum_{k=0}^{n-2} Q_{k}y^{k}$ be any polynomial with degree less than $n-1$ and $y_{j}=\mu_{j}^{2}, y_{n}=-\mu_{n}^{2}$. It follows from Lemma \ref{S_2_Lemma_Com} that
			\begin{equation*}
				\sum_{j=1}^{n}\frac{Q(y_{j})}{\prod_{k\neq j}(y_{j}-y_{k})}=0.
			\end{equation*}
		We further define that
			\begin{equation*}
				\begin{split}
					&\mathscr{L}_{n,Q}=\mathscr{L}_{n}-Q(-\mu_{n}^{2});\\
					&\mathscr{L}_{j,Q}=\mathscr{L}_{j}+Q(\mu_{j}^{2}).
				\end{split}
			\end{equation*}
		As a consequence, the Laplacian operator can also be expressed as 
			\begin{equation*}
				\frac{1}{\prod_{k=1}^{n-1}(\mu_{n}^{2}+\mu_{k}^{2})}\mathscr{L}_{n,Q}-\sum_{j=1}^{n-1}\frac{1}{(\mu_{n}^{2}+\mu_{j}^{2})\prod_{k\neq j}(\mu_{k}^{2}-\mu_{j}^{2})}\mathscr{L}_{j,Q}.
			\end{equation*}
		We separate the variables
			\begin{equation*}
				f=\prod_{j=1}^{n}f_{j}(\mu_{j}).
			\end{equation*}
		If $\mathscr{L}_{j,Q}f_{j}=0$, then $f$ is a solution to the Laplacian equation.
			\begin{remark}\label{S_3_Rmk_Wick}
				The equation $\mathscr{L}_{j,Q}f=0$ is identically the same as
					\begin{equation}\label{S_3_Eqn_Ell}
						S(-\mu^{2})\frac{\p^{2}}{\p \mu^{2}}f-\mu S'(-\mu^{2})\frac{\p}{\p \mu}f+Q(\mu^{2}).
					\end{equation}
				If we use Wick's rotation $\mu_{n}=\II \mu$ , then Equation \ref{S_3_Eqn_Ell} will become
					\begin{equation*}
						\mathscr{L}_{n,Q}f=0.
					\end{equation*}
			\end{remark}	
		We will begin by finding quadratic polynomial solutions to Equation \ref{S_3_Eqn_Ell}, which is closely related to harmonic polynomials with quadratic growth in $\R^{n}$. Assume $f=\mu^{2}-p$, then $f$ is a solution, which is equivalent to
			\begin{equation*}
				2S(-\mu^{2})-2\mu^{2}S'(-\mu^{2})+Q(\mu^{2})(\mu^{2}-p)=0.
			\end{equation*}
		Compare the coefficients
			\begin{equation*}
				\begin{split}
					&(M_{n-1}+Q_{n-2})\mu^{2(n-1)}=0;\\
					&(M_{j}+Q_{j-1}-pQ_{j})\mu^{2j}=0, 0<j<n-1;\\
					&M_{0}-pQ_{0}=0.
				\end{split}
			\end{equation*}
		Here $2S(-y)-2yS'(-y)=\sum_{j=0}^{n-1}M_{j}y^{j}$. It follows naturally that if $y=p$ is a solution to 
			\begin{equation}\label{S_3_Eqn_Ply}
				\sum_{j=0}^{n-1}M_{j}y^{j}=0,
			\end{equation}
		then we can solve $Q_{i},i=0,\cdots, n-2$ inductively. The equation $S(-y)=0$ has $n-1$ distinct positive roots, and henceforth $S(-y)-yS'(-y)=0$ has $n-1$ distinct positive solutions, which we denote as $p_{i}$. In particular, we can arrange $p_{i}$ such that
			\begin{equation*}
				0<p_{n-1}<h_{n-1}^{2}<\cdots<h_{2}^{2}<p_{1}<h_{1}^{2}.
			\end{equation*}
		It follows from construction that
			\begin{equation*}
				P_{i}=(\mu_{n}^{2}+p_{i})\prod_{j=1}^{n-1}(\mu_{j}^{2}-p_{i}),
			\end{equation*}		
			is a solution to the Laplacian equation, which is known as an ellipsoidal harmonic of degree $2$ \cite{Dass12}. The Lemma \ref{S_2_Lemma_Coor_4} implies that each $P_{i}$ is a harmonic polynomial of the form
				\begin{equation*}
					C_{0}+\sum_{i=1}^{n}C_{i}x_{i}^{2}.
				\end{equation*}
		
		On the other hand, given a polynomial solution to $\mathscr{L}_{Q}f=0$, we can produce another linearly independent solution in the following way.
			\begin{lemma}\label{S_3_Lemma_Second}
				If $P(\mu)$ is a non-zero polynomial solution of degree $k$ to Equation \ref{S_3_Eqn_Ell} then
					\begin{equation*}
						Q(\mu):=P(\mu)\int_{0}^{\mu}\frac{\ud u}{P^{2}(u)\sqrt{S(-u^{2})}}
					\end{equation*}
				is another solution to Equation \ref{S_3_Eqn_Ell}.
			\end{lemma}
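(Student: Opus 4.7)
The plan is to invoke the classical reduction-of-order technique for second-order linear ODEs. Rewriting Equation \ref{S_3_Eqn_Ell} in standard form by dividing through by $S(-\mu^2)$ gives
\begin{equation*}
f'' + p(\mu) f' + q(\mu) f = 0, \qquad p(\mu) = -\frac{\mu S'(-\mu^2)}{S(-\mu^2)}, \qquad q(\mu) = \frac{Q(\mu^2)}{S(-\mu^2)}.
\end{equation*}
Since $S(-\mu^2)$ is nonzero in a neighborhood of $\mu = 0$ (because $S(0) = \prod_j h_j^2 > 0$), this is a regular linear ODE there, so the solution space is two-dimensional and finding a second solution linearly independent from $P$ is enough.

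The main step is to compute the Wronskian $W = P Q' - P' Q$ via Abel's identity. From $W' = -p(\mu) W$ I would substitute $u = -\mu^2$, so that $\mu \, d\mu = -du/2$, which gives
\begin{equation*}
\frac{d}{d\mu}\log|W| = \frac{\mu S'(-\mu^2)}{S(-\mu^2)} = -\frac{1}{2} \frac{d}{d\mu}\log |S(-\mu^2)|,
\end{equation*}
and hence $W = C/\sqrt{S(-\mu^2)}$ for some constant $C$. Taking $C = 1$, the relation $P Q' - P' Q = 1/\sqrt{S(-\mu^2)}$ is equivalent to $(Q/P)' = 1/(P^2 \sqrt{S(-\mu^2)})$, which upon integrating from $0$ produces precisely the formula stated for $Q(\mu)$. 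One should also check that $Q$ is genuinely independent of $P$, which follows because the Wronskian is nonvanishing.

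The only subtlety I anticipate is verifying the integrability of the integrand near $\mu = 0$, which requires $P(0) \neq 0$; in the intended applications $P(\mu) = \mu^2 - p_i$ with $p_i > 0$, so $P(0) \neq 0$ and the integrand is smooth at the origin. Beyond that, the proof reduces to the standard reduction-of-order computation, with no further obstacles expected.
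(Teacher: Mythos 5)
Your proof is correct, and for the first (and harder) half — showing $Q$ actually solves the ODE — you supply the reduction-of-order / Abel's identity computation that the paper only asserts as ``easy to verify.'' The Wronskian calculation
\begin{equation*}
W = \frac{C}{\sqrt{S(-\mu^2)}}, \qquad \text{so } \left(\frac{Q}{P}\right)' = \frac{C}{P^2\sqrt{S(-\mu^2)}},
\end{equation*}
is exactly the right mechanism and correctly reproduces the stated formula. Your remark about $P(0)\neq 0$ (true in the intended applications $P=1$ and $P=\mu^2-p_i$) is a legitimate hygiene check for the lower endpoint of the integral.

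Where you genuinely diverge from the paper is in the linear-independence argument. You use the nonvanishing Wronskian, a clean and local criterion. The paper instead sends $\mu\to\infty$ along the imaginary axis and shows that $Q(\mu)=AP(\mu)+O(|\mu|^{-(n-2+k)})$ with a nonzero remainder, concluding that $Q$ is not a polynomial at all. That asymptotic analysis is doing slightly more work than yours: it not only distinguishes $Q$ from scalar multiples of $P$, it pins down the growth rate of $Q$ at infinity, which is precisely the input the paper needs later (in Section 4) to extract the harmonic-polynomial part of $f_{\bs{h}}$ from the leading term of $\int_0^{\mu}du/(P^2\sqrt{S(-u^2)})$. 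So your route is shorter and perfectly adequate for the lemma as stated, but the paper's choice of argument is front-loading information used downstream. No gap in your proposal.
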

			\begin{proof}
				It is easy to verify $Q(\mu)$ is a solution. To check $Q(\mu), P(\mu)$ are different solutions, let $\mu\to \infty$ in the imaginary axis. The integral
					\begin{equation*}
						\int_{0}^{\mu}\frac{\ud u}{P^{2}(u)\sqrt{S(-u^{2})}}=\int_{0}^{\II \infty}\frac{\ud u}{P^{2}(u)\sqrt{S(-u^{2})}}+O(|\mu|^{-(n-2+2k)}).
					\end{equation*}
				As a result, $Q(\mu)=AP(\mu)+O(|\mu|^{^{-(n-2+k)}})$, where $A$ is a constant, is not a polynomial, for the remaining term $O(|\mu|^{^{-(n-2+k)}})$ is non-zero.
			\end{proof}
		It should be remarked that when $P(\mu)$ is an even (odd) function, then $Q(\mu)$ is an odd (even) function. This observation will play a crucial role in the construction of the $\Z_{2}$-harmonic function.
	
	\subsection{Construction of non-degenerate $\Z_{2}$-harmonic function}\label{S_3_2}
		Follow from Proposition \ref{S_2_Prop_Coor} if $f=f(\Theta,\mu_{n-1},\mu_{n})$ is an odd function with respect to $(\mu_{n}, \mu_{n-1})$, then $f$ defines a multivalued function on $\R^{n}$ with monodromy $-1$ around the branching set $E_{\bs{h}}$. Recall that $S(y)=\prod_{j=1}^{n-1}(y+h_{j}^{2})$, and we define 
			\begin{equation}\label{S_3_Eqn_EllHar}
				\begin{split}
					&f_{0}=\int_{0}^{\mu_{n}}\frac{\ud u}{\sqrt{S(u^{2})}},\\
					&f_{2,i}=\bigg((\mu_{n}^{2}+p_{i})\int_{0}^{\mu_{n}}\frac{\ud u}{(u^{2}+p_{i})^{2}\sqrt{S(u^{2})}}\bigg)\prod_{j=1}^{n-1}(\mu_{j}^{2}-p_{i}).
				\end{split}
			\end{equation}
		Those harmonic functions are odd functions with respect to $(\mu_{n},\mu_{n-1})$, and hence, they are $\Z_{2}$-harmonic functions on $\R^{n}$. Moreover, $|f_{0}|$ is bounded, and $|f_{2,i}|$ grows no faster than quadratic in $\R^{n}$.\\
		
		The above $\Z_{2}$-harmonic functions are not yet non-degenerate. To obtain a non-degenerate $\Z_{2}$-harmonic function with quadratic growth, take Taylor's expansion at $(\mu_{n},\mu_{n-1})=(0,0)$. Let $z$ be the complex coordinates defined in Proposition \ref{S_2_Prop_Coor}, we will have
			\begin{equation*}
				\begin{split}
					&f_{0}=\frac{1}{\prod_{j=1}^{n-1}h_{j}}\bigg(\mu_{n}-\frac{1}{6}\big(\sum_{j}^{n-1}\frac{1}{h_{j}^{2}}\big)\mu_{n}^{3}\bigg)+O(|z|^{\frac{5}{2}});\\
					&f_{2,i}=\frac{\prod_{k=1}^{n-2}(\mu_{j}^{2}-p_{i})}{\prod_{j=1}^{n-1}h_{j}}\bigg(\frac{1}{6}\big(-\frac{2}{p_{i}}+\sum_{j=1}^{n-1}\frac{1}{h_{j}^{2}}\big)\mu_{n}^{3}-\mu_{n}+\frac{1}{p_{i}}\mu_{n}\mu_{n-1}^{2}\bigg)+O(|z|^{\frac{5}{2}}).
				\end{split}
			\end{equation*}
		Apply Lemma \ref{S_2_Lemma_Com}, we define the $\Z_{2}$-harmonic function $f_{\bs{h}}$ in Theorem \ref{S_1_Prop_Main}
			\begin{equation*}
				f_{\bs{h}}:=\frac{1}{n}\sqrt{\sigma_{n-1}(\bs{h})}\bigg(f_{0}+(-1)^{n-2}\sum_{i=1}^{n-1}\frac{1}{\prod_{i\neq j}(p_{i}-p_{j})}f_{2,i}\bigg).
			\end{equation*}
		The asymptotic behavior of $f_{\bs{h}}$ at $(\mu_{n},\mu_{n-1})=(0,0)$ is
			\begin{equation*}
				f_{\bs{h}}=\frac{(-1)^{n-2}}{n}\big(\prod_{k=1}^{n-2}\mu_{j}^{2}\big)\bigg(\sum_{i=1}^{n-1}\frac{1}{3p_{i}\prod_{k\neq j}(p_{i}-p_{k})}\bigg)\bigg(-\mu_{n}^{3}+3\mu_{n}\mu_{n-1}^{2}\bigg)+O(|z|^{\frac{5}{2}}).
			\end{equation*}
		Notice that $p_{i}$ is a solution to
			\begin{equation*}
				S(-y)-yS'(-y)=0,
			\end{equation*}
		and henceforth
			\begin{equation*}
				(-1)^{n-2}\frac{1}{p_{i}}=\frac{np_{i}^{n-2}}{\prod_{j=1}^{n-1}h_{j}^{2}}+l.o.t.
			\end{equation*}
		We now conclude
			\begin{equation*}
				f_{\bs{h}}=-\frac{\prod_{k=1}^{n-2}\mu_{j}^{2}}{3\prod_{j=1}^{n-1}h_{j}^{2}}(\mu_{n}^{3}-3\mu_{n} \mu_{n-1}^{2})+O(|z|^{\frac{5}{2}}).
			\end{equation*}
		In terms of the complex coordinate
			\begin{equation*}
				f_{\bs{h}}=-\frac{2^{3/2}}{3}\frac{\prod_{k=1}^{n-2}\sqrt{\mu_{j}}}{\prod_{j=1}^{n-1}\sqrt{h_{j}}}\RRe{z^{\frac{3}{2}}}+O(|z|^{\frac{5}{2}})
			\end{equation*}
		The coefficient 
			\begin{equation*}
				B(\Theta):=-\frac{2^{3/2}}{3}\frac{\prod_{k=1}^{n-2}\sqrt{\mu_{j}}}{\prod_{j=1}^{n-1}\sqrt{h_{j}}}
			\end{equation*}
		is strictly less than $0$ on the branching set $E_{\bs{h}}$.\\
	
\section{Asymptotic behavior of the $\Z_{2}$-harmonic functions}		
		We now study the asymptotic behavior of $f_{\bs{h}}$ as $|\bs{x}|\to \infty$. As commented in Lemma \ref{S_3_Lemma_Second}, define
			\begin{equation*}
				\begin{split}
					&A_{0}=\int_{0}^{\infty}\frac{\sqrt{\sigma_{n-1}(\bs{h})}\ud u}{\sqrt{S(u^{2})}};\\
					&A_{2,i}=\int_{0}^{\infty}\frac{\sqrt{\sigma_{n-1}(\bs{h})}\ud u}{(u^{2}+p_{i})^{2}\sqrt{S(u^{2})}}.
				\end{split}
			\end{equation*}
		On a single-valued branch, $f_{\bs{h}}$ satisfies
			\begin{equation*}
				f_{\bs{h}}-\bigg(A_{0}+(-1)^{n-2}\sum_{i=1}^{n-1}\frac{A_{2,i}P_{i}}{\prod_{k\neq i}(p_{i}-p_{k})}\bigg)=O(|\bs{x}|^{2-n}).
			\end{equation*}
		The main proposition in this section is as follows:
			\begin{prop}\label{S_4_Prop_Asym}
				There exist parameters $a_{i}, 1\leq i \leq n$, depending on $\bs{h}$, such that
					\begin{equation*}
						f_{\bs{h}}=a_{0}-(\sum_{j=1}^{n}a_{j}x_{j}^{2})+O(|\bs{x}|^{2-n}),
					\end{equation*}
				where 
					\begin{equation*}
					\begin{split}
						&a_{j}=\frac{1}{2}\int_{0}^{\infty}\frac{\sqrt{\sigma_{n-1}(\bs{h})}\ud u}{(u^{2}+h_{j}^{2})\sqrt{S(u^{2})}}, 1\leq i\leq n-1;\\
						&a_{n}=-\sum_{j=1}^{n-1}a_{j};\\
						& a_{0}=\frac{1}{2}\int_{0}^{\infty}\frac{\sqrt{\sigma_{n-1}(\bs{h})}\ud u}{\sqrt{S(u^{2})}}.
					\end{split}
					\end{equation*}
				In other words, $f_{\bs{h}}-C$ is asymptotic to a harmonic quadric of index $n-1$.
			\end{prop}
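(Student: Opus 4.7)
The plan is to reduce the proposition to a polynomial identity in the $x_j^2$'s and evaluate the relevant finite sums via residue calculus combined with a single integration-by-parts identity. Modulo $O(|\bs{x}|^{2-n})$, $f_{\bs{h}}$ equals a polynomial of degree $\leq 2$ in the $x_j^2$, and the task is to match the constant and quadratic coefficients to the claimed integrals.

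First, I extract the harmonic-quadric structure. Writing $P_i = (-1)^{n-1}\prod_{k=1}^{n}(p_i - y_k)$ with $y_k = \mu_k^2$ for $k<n$ and $y_n = -\mu_n^2$, Lemma~\ref{S_2_Lemma_Coor_4} gives $P_i = C_0^{(i)} + \sum_{j=1}^n C_j^{(i)} x_j^2$, and harmonicity of each $P_i$ forces $\sum_j C_j^{(i)} = 0$. Consequently the whole asymptotic expression is automatically a harmonic quadric, yielding $a_n=-\sum_{j<n}a_j$; the logarithmic-derivative identity $S'(u^2)/S(u^2)=\sum_{j<n}1/(u^2+h_j^2)$ then reconciles this with the $a_n$-formula in Theorem~\ref{S_1_Thm_Main}.

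Next, I establish the integral identity
\[
\int_{0}^{\infty}\frac{du}{(u^{2}+p_{i})^{2}\sqrt{S(u^{2})}}=\frac{1}{2p_{i}}\sum_{j=1}^{n-1}\frac{h_{j}^{2}}{h_{j}^{2}-p_{i}}\int_{0}^{\infty}\frac{du}{(u^{2}+h_{j}^{2})\sqrt{S(u^{2})}},
\]
which re-expresses each $A_{2,i}$ as an explicit linear combination of the $a_j$'s. It follows from the elementary decomposition $\tfrac{1}{(u^2+p_i)^2}=\tfrac{1}{2p_i}\bigl[\tfrac{1}{u^2+p_i}+\tfrac{d}{du}\tfrac{u}{u^2+p_i}\bigr]$, integration by parts against $1/\sqrt{S(u^2)}$ (boundary terms vanish for $n\ge 3$), partial fractions of $\tfrac{u^2}{(u^2+p_i)(u^2+h_j^2)}$, and the cancellation $\sum_j 1/(h_j^2-p_i) = 1/p_i$ deduced from the defining relation $p_i S'(-p_i)=S(-p_i)$.

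Finally, the coefficient of $x_j^2$ (for $j<n$) in the quadratic part reduces to $T_j := \sum_{i=1}^{n-1}P_i/[p_i q'(p_i)(h_j^2-p_i)]$, where $q(y):=\prod_i(y-p_i)$. I evaluate $T_j$ by the residue theorem applied to $R(t)/[(h_j^2-t)q(t)]$ with $R(t)=P(t,\bs{x})/t$: the residue at infinity vanishes by degree count, the residues at $t=p_i$ reproduce $T_j$, and the remaining poles at $t=h_j^2$ and $t=0$ give a closed form in $x_j^2, x_n^2$ after using the factorization $q(y)=(-1)^{n-1}[S(-y)-yS'(-y)]/n$, the facts $S(-h_j^2)=0$ and $q(0)=(-1)^{n-1}\prod_k h_k^2/n$, the coordinate formula \ref{S_2_Eqn_Coor_2}, and Lemma~\ref{S_2_Lemma_Coor_4} to identify $P(0)=\prod_k\mu_k^2$ with $\prod_l h_l^2\cdot x_n^2$. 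Combining all three steps, the quadratic part collapses onto $-\sum_j a_j x_j^2$ after cancellation of the $n$-factors from $T_j$ and from the overall constant in the definition of $f_{\bs{h}}$. The main obstacle is this last residue calculation: correctly handling the simple pole of $R$ at $t=0$ is the delicate step, since it supplies precisely the $x_n^2$ contribution that combines with $x_j^2$ to form a harmonic quadric, and the sign conventions together with the substitution via Lemma~\ref{S_2_Lemma_Coor_4} must be tracked with care.
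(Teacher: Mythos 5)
Your route is organized differently from the paper's: the paper expands $P_i$ in $\sigma_k(\bs{\mu})$ directly, uses Lemma~\ref{S_2_Lemma_Coor_4}, evaluates a double sum over $i,k$ via Lemma~\ref{S_2_Lemma_Com} (recognizing it as $-\frac{d}{dy}$ of a simpler sum), and then integrates by parts once using $\frac{d}{dy}\big(yS(y)\big)=n\prod_l(y+p_l)$. You instead first re-express each $A_{2,i}$ in terms of the $a_j$'s (your integration-by-parts identity is correct, as is the deduction $\sum_j 1/(h_j^2-p_i)=1/p_i$ from $p_iS'(-p_i)=S(-p_i)$), and then collapse a single rational sum $T_j$ by residues. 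This is a reasonable reorganization.

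However, the residue step contains a concrete error. With $R(t)=P(t,\bs{x})/t$, the function $R(t)/[(h_j^2-t)q(t)]=P(t,\bs{x})/[t(h_j^2-t)q(t)]$ has numerator of degree $n$ (leading coefficient $(-1)^{n-1}$) and denominator of degree $n+1$ (leading coefficient $-1$), so it behaves like $(-1)^n/t$ at infinity and the residue at infinity is $(-1)^{n+1}$, not zero. The degree drop is only one, not two. This constant term in $T_j$ is invisible in the coefficients of $x_j^2$ and $x_n^2$ (so your quadratic coefficients are unaffected), but it is exactly the contribution needed to produce the correct constant $a_0=\tfrac{1}{2}A_0$: dropping it gives $a_0=\tfrac{1}{n}A_0$, and the correction requires the additional identity $\sum_j a_j h_j^2=\tfrac{n-2}{2}A_0$ (again integration by parts). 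Since the proposition asserts the value of $a_0$ and your writeup stops at "the quadratic part collapses," you should either keep the residue at infinity and carry it through the constant term, or compute $a_0$ separately; as written this is a gap.
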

				
			\begin{proof}
				Expand $P_{i}$
					\begin{equation*}
						P_{i}=(-1)^{n-1}\sum_{k=0}^{n}(-1)^{n-k}p_{i}^{k}\sigma_{n-k}(\bs{\mu}).
					\end{equation*}
				Notice that
					\begin{equation*}
						\sum_{k=0}^{n-1}(-1)^{n-1-k}p_{i}^{k}\sigma_{n-1-k}(\bs{p})=0,
					\end{equation*}
				then
					\begin{equation*}
					\begin{split}
						(-1)^{n-2}P_{i}&=\bigg(\sum_{k=1}^{n-2}(-1)^{n-k}\sigma_{n-k}(\bs{p})p_{i}^{k}\bigg)\\
							&+\bigg(\sum_{k=0}^{n-2}(-1)^{n-1-k}p_{i}^{k}(\sigma_{n-k}(\bs{\mu})+(\sigma_{1}(\bs{p})-\sigma_{1}(\bs{\mu}))\sigma_{n-1-k}(\bs{p}))\bigg)
					\end{split}
					\end{equation*}
				Using the Lemma \ref{S_2_Lemma_Coor_4}, we deduce
					\begin{equation}\label{S_4_Eqn_CooA}
						a_{j}=\frac{\sqrt{\sigma_{n-1}(\bs{h})}}{n}\sum_{i=1}^{n-1}\sum_{k=0}^{n-2}\int_{0}^{\infty}\frac{(-1)^{n-1+k}(\sigma_{n-k-1,j}(\bs{h})-\sigma_{n-1-l}(\bs{p}))p_{i}^{k}\ud u}{\prod_{l\neq i}(p_{i}-p_{l})(u^{2}+p_{j})^{2}\sqrt{S(u^{2})}}.
					\end{equation}
				We apply Lemma \ref{S_2_Lemma_Com} and obtain the following equality
					\begin{equation*}
					\begin{split}
						\sum_{i=1}^{n-1}\frac{p_{i}^{k}}{(y+p_{i})^{2}\prod_{l\neq i}(p_{i}-p_{l})}&=-\frac{\ud }{\ud y}\bigg(\sum_{i=1}^{n-1}\frac{p_{i}^{k}}{(y+p_{i})\prod_{l\neq i}(p_{i}-p_{l})}\bigg)\\
							&=-\frac{\ud }{\ud y}\bigg(\sum_{i=1}^{n-1}\frac{\sum_{q=0}^{n-2}y^{q}\sigma_{i,n-2-q}(\bs{p})p_{i}^{k}}{\prod^{n-1}_{l=1}\prod_{q\neq i}(y+p_{l})(p_{i}-p_{q})}\bigg)\\
							&=-\frac{\ud }{\ud y}\bigg(\frac{(-1)^{n-2+k}y^{k}}{\prod_{l=1}^{n-1}(y+p_{l})}\bigg), 0\leq k\leq n-2.
					\end{split}
					\end{equation*}
				Then the Equation \ref{S_4_Eqn_CooA} reduces to
					\begin{equation*}
						a_{j}=\frac{1}{n}\int_{0}^{\infty}\frac{\sqrt{\sigma_{n-1}(\bs{h})}}{2u\sqrt{S(u^{2})}}\frac{\ud }{\ud u}\bigg(\frac{u^{2}S(u^{2})}{(u^{2}+h_{j}^{2})\prod_{l=1}^{n-1}(u^{2}+p_{l})}\bigg) \ud u.
					\end{equation*}
				Notice that $\frac{\ud }{\ud y}yS(y)=n\prod_{l=1}^{n-1}(y+p_{l})$ and henceforth, integration by parts yields to
					\begin{equation*}
						a_{j}=\frac{1}{2}\int_{0}^{\infty}\frac{\sqrt{\sigma_{n-1}(\bs{h})}\ud u}{(u^{2}+h_{j}^{2})\sqrt{S(u^{2})}}
					\end{equation*}
                Similarly, using Equation \ref{S_2_Eqn_Coor_4}, we conclude
					\begin{equation*}
						a_{0}=\frac{1}{2}\int_{0}^{\infty}\frac{\sqrt{\sigma_{n-1}(\bs{h})}\ud u}{\sqrt{S(u^{2})}}.
					\end{equation*}
			\end{proof}
			\begin{remark}
				If we let $t^{-1}\bs{h}=(t^{-1}h_{i})$, then the $\Z_{2}$-harmonic function $t^{-1}f_{t^{-1}\bs{h}}$ has the same quadric term as $f_{\bs{h}}$ in the asymptotic expansion, but the constant term becomes $t^{-1}a_{0}$, which will go to $0$ as $t\to \infty$. Moreover, in this case, the ellipsoid scales down to the origin.
			\end{remark}

			To prove the Proposition \ref{S_1_Prop_Main}, we will need to show that any multiple of $n-1$ positive parameters can be reached by $(a_{1},\cdots, a_{n-1})$. To see this, define a cone $K_{n}$ in $\R^{n-1}$ as follows
				\begin{equation*}
					K_{n}:=\{(y_{1},\cdots, y_{n-1})|y_{1}, \cdots, y_{n-1}> 0\},
				\end{equation*}
			and a map 
				\begin{equation*}
					F_{n}:K_{n}\to K_{n},(\frac{1}{h_{1}},\cdots,\frac{1}{h_{n-1}})\mapsto (a_{1},\cdots, a_{n-1}).
				\end{equation*}
				\begin{corollary}
					The map $F_{n}$ is surjective. In other words, for every index $n-1$ harmonic quadric, there exists a non-degenerate $\Z_{2}$-harmonic polynomial that asymptotes to it.
				\end{corollary}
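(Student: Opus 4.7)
The plan is to deduce surjectivity of $F_n$ from a degree-theoretic argument, after reducing to a map between closed simplices via scaling invariance. First, a change of variables $u = t v$ in the integrals defining $a_j$ shows that $F_n(\lambda \bs{y}) = \lambda F_n(\bs{y})$ for all $\lambda > 0$. Hence $F_n$ descends to a continuous map $\bar F_n : \Delta \to \Delta$ on the open $(n-2)$-simplex $\Delta = \{\bs{y} \in K_n : \sum_i y_i = 1\}$, and surjectivity of $F_n$ on $K_n$ is equivalent to surjectivity of $\bar F_n$ on $\Delta$.

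The next step is to understand the boundary behavior of $\bar F_n$. For any proper subset $S \subsetneq \{1, \ldots, n-1\}$, letting $h_i \to \infty$ for $i \in S$ (equivalently $y_i \to 0$) should give $a_i \to 0$ for $i \in S$ and $a_j \to a_j^{\mathrm{red}}$ for $j \notin S$, where $a_j^{\mathrm{red}}$ is the coefficient of the reduced $(n - |S|)$-parameter system obtained by deleting the $h_i$'s with $i \in S$. The vanishing is easy: bounding $\sqrt{S(u^2)} \geq (u^2 + h_i^2)^{1/2} \prod_{k \neq i} h_k$ gives the crude estimate $a_i \leq \frac{1}{2 h_i}$. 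The convergence of $a_j$ for $j \notin S$ follows by dominated convergence, using that $h_i / \sqrt{u^2 + h_i^2} \to 1$ pointwise in $u$. Together these limits show that $\bar F_n$ extends continuously to $\overline{\Delta}$, and that the restriction to the face $\{y_i = 0 : i \in S\}$ is identified with $\bar F_{n - |S|}$ via the obvious inclusion. In particular, $\bar F_n$ sends every closed face of $\overline{\Delta}$ into itself.

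The conclusion then follows from a general topological lemma: any continuous self-map $g : \overline{\Delta^k} \to \overline{\Delta^k}$ that sends each face into itself is surjective. I would prove this by induction on $k$; the case $k=0$ is trivial. For the inductive step, the linear homotopy $H(\bs{y}, t) = (1-t) g(\bs{y}) + t \bs{y}$ stays inside each face by convexity, and its restriction to $\partial \overline{\Delta^k} \cong S^{k-1}$ is a homotopy between $g|_{\partial}$ and the identity, so $\deg(g|_{\partial}) = 1$. If some interior point $p$ were missed, $g$ would factor through $\overline{\Delta^k} \setminus \{p\}$, which deformation-retracts onto $\partial \overline{\Delta^k}$, giving a null-homotopy of $g|_{\partial}$ and contradicting the degree computation; boundary points are hit by the inductive hypothesis applied to each face restriction. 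Applying this lemma to $\bar F_n$ completes the proof.

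The main obstacle is the bookkeeping around the boundary extension in the second paragraph, in particular making the limits uniform enough that $\bar F_n$ extends continuously to all strata of $\partial \overline{\Delta}$ at once, and verifying that the restriction to a codimension-$|S|$ face really coincides with the projectivization of the reduced map $F_{n-|S|}$ (and not merely something conjugate to it). Once these identifications are in place, the topological argument is entirely formal and uses nothing about the explicit form of the $a_j$'s beyond positivity and the vanishing rate $a_i = O(1/h_i)$.
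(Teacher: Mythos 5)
Your proof is correct and follows essentially the same route as the paper: homogeneity of $F_n$ reduces the claim to a degree-theoretic statement about a face-preserving self-map of a simplex, proved by combining a linear homotopy to the identity on the boundary sphere with a retraction argument to rule out any missed interior point. The only differences are cosmetic: you package the boundary argument as a self-contained topological lemma with its own internal induction on dimension (so the $n=3$ base case is subsumed rather than handled separately by the intermediate value theorem as in the paper), and you supply the explicit bound $a_i \leq 1/(2h_i)$ and the dominated-convergence argument for the continuous extension of $F_n$ to the faces of $\overline{K_n}$, which the paper asserts as ``easy to see'' without detail.
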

				\begin{proof}
					It is easy to see that $F_{n}=(F_{n,1},\cdots, F_{n,n-1})$ can be extended continuously to the closure $\overline{K_{n}}$. Indeed, $\p \overline{K_{n}}$ is a union of $\overline{K_{n-1}}$ and $F_{n}$ that can be continuously extended to $F_{n-1}$ on $\overline{K_{n-1}}$. Let $P_{n}$ be the radial projection from $\overline{K_{n}}\setminus\{0\}$ to the intersection
						\begin{equation*}
							\Delta_{n}=\overline{K_{n}}\cap \{y_{1}+\cdots + y_{n}=1\}.
						\end{equation*}
					Moreover, $F_{n}$ satisfies
						\begin{equation*}
							F_{n}(ty_{1},\cdots,ty_{n-1})=tF_{n}(y_{1},\cdots,y_{n-1}),
						\end{equation*}
					then $F_{n}$ is surjective if and only if $P_{n}\circ F_{n}: \Delta_{n}\to \Delta_{n}$ is surjective. This amounts to showing that $P_{n}\circ F_{n}|_{\p \Delta_{n}}$ is homotopic to $Id|_{\p \Delta_{n}}$, where the homotopy is
						\begin{equation*}
							H_{n}(\bs{x},t)=t Id+(1-t)P_{n}\circ F_{n}.
						\end{equation*} 
					When $n=3$, surjectivity is reduced to the intermediate value theorem. On the other hand, when $n>3$, we prove by contradiction and induction. Suppose $P_{n}\circ F_{n}$ is not surjective. By the induction hypothesis, let $p$ be a point in the interior of $\Delta_{n-1}$, which does not lie in the image of $P_{n} \circ F_{n}$. Now let $\pi_{p}$ be the radial projection from $\Delta_{n}\setminus\{p\}$ to $\p \Delta_{n}$. Therefore,
						\begin{equation*}
							P_{n}\circ F_{n}|_{\p \Delta_{n}}=\pi_{p}\circ P_{n}\circ F_{n}|_{\p \Delta_{n}}
						\end{equation*}
						is homotopy to a constant map. This contradicts the fact that $\p \Delta_{n}$ is homeomorphic to $S^{n-2}$.
				\end{proof}
			\begin{remark}
				The injectivity of the map $F_{n}$ was recently shown by Yuanbo Zhou through direct computation \cite{Commu_1}. In fact, to verify the injectivity, it suffices to show that the Jacobian $DF_{n}$ is injective on $K_{n}$.  Let $\Phi=a_{0}\prod_{j=1}^{n-1} h_{i}^{-1}$, Yuanbo Zhou observed that
                    \begin{equation*}
                        -\frac{1}{2}a_{i}=\frac{\p }{\p h_{i}}\Phi.
                    \end{equation*}
                The injectivity of $DF_{n}$ then follows immediately from the fact that the Hessian for the integrand of $\Phi$ is strictly positive with respect to the variables $h_{1},\cdots, h_{n-1}$.
			\end{remark}

\section{Relation to Donaldson's Twistor construction}\label{S_5}%%%%%%%%
	In the recent paper \cite{Donaldson25}, Simon Donaldson discovered another construction of $\Z_{2}$-harmonic functions using the twistor method when $n=3$. In this section, we demonstrate that our example coincides with Donaldson's when $n=3$. In his construction, the $\Z_{2}$-harmonic function is represented by a contour integral in twistor space, which can be written as
		\begin{equation}\label{S_5_Eqn_D}
			f=\RRe{\int_{0}^{2\pi}Q^{-3/2}\tan^{-1}(\frac{w}{\sqrt{Q}})(w^{2}+Q)\ud \theta}-\int_{0}^{2\pi}Q^{-1}x_{3}\ud \theta.
		\end{equation}
	Here,
		\begin{equation*}
		\begin{split}
			&Q=1+\epsilon \cos(2\theta), -1<\epsilon<1;\\
			&w=x_{3}+\sqrt{-1}\big(x_{1}\cos(\theta)+x_{2}\sin(\theta)\big),
		\end{split}
		\end{equation*}
	The branching set of $f$ is an ellipse in the $x_{1}x_{2}$-plane
		\begin{equation*}
			\frac{x_{1}^{2}}{1+\epsilon}+\frac{x_{2}^{2}}{1-\epsilon}=1.
		\end{equation*}

	Suppose $x_{3}\geq 0$, we can choose a single-valued branch of $\tan^{-1}(\frac{w}{\sqrt{Q}})$. Although the function $\tan^{-1}(\frac{w}{\sqrt{Q}})$ will develop logarithmic singularities at $w=\pm \sqrt{-1}\sqrt{Q}$, the integrand remains bounded.\\
	
	The main ingredient for the proof is the uniqueness theorem established by Weifeng Sun \cite{Weifeng22}.
		\begin{prop}[Theorem 0.4]\label{S_5_Prop_Uniqueness}
			If $K$ is a compact smoothly embedded oriented codimension-$2$ submanifold in $\R^{n}, n\geq 3$. Then there is a $1$-$1$ correspondence (up to a sign) between a $\Z_{2}$-harmonic function $f$ that branches along $K$ with polynomial growth and a harmonic polynomial $P$. The bijection is given by
				\begin{equation*}
					f-P\to 0, |\bs{x}|\to \infty.
				\end{equation*} 
			In particular, the difference is bounded by $O(|\bs{x}|^{2-n})$.
		\end{prop}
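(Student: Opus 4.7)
The plan is to separate the correspondence into three pieces: the construction of the asymptotic map $f \mapsto P$, its injectivity, and its surjectivity, the last being the main obstacle.

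For the map, pick $R$ large so $K \subset B_R$. The flat bundle $L$ is trivial on $\R^n \setminus B_R$, since that region is simply connected for $n \geq 3$ (equivalently, any loop there has zero linking number with the compact $K$). Hence any $\Z_2$-harmonic function $f$ admits a single-valued branch $\tilde f$ there, harmonic in the ordinary sense and of polynomial growth. The classical multipole expansion on the exterior of a ball then gives $\tilde f = P + O(|\bs x|^{2-n})$ for a unique harmonic polynomial $P$, and the other choice of branch only flips the overall sign.

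For injectivity, suppose two $\Z_{2}$-harmonic functions $f_{1}, f_{2}$ produce the same $P$ with compatibly chosen branches. Then $g := f_{1} - f_{2}$ is $\Z_{2}$-harmonic with $g = O(|\bs x|^{2-n})$. I would integrate $|\nabla g|^{2}$ over $B_{R} \setminus T_{\epsilon}(K)$, where $T_{\epsilon}(K)$ is an $\epsilon$-tube around $K$, and show both boundary terms vanish in the limits $R \to \infty$, $\epsilon \to 0$. On $\partial B_{R}$ the flux integrand $g\,\partial_{\nu} g$ is $O(R^{3-2n})$ against area $O(R^{n-1})$, giving $O(R^{2-n}) \to 0$. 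Around the $\epsilon$-tube, Donaldson's expansion $g = \RRe{A\zeta^{1/2}} + O(|\zeta|^{3/2})$ yields $|g\,\partial_{\nu} g| = O(1)$ against area $O(\epsilon)$, giving $O(\epsilon) \to 0$. Hence $\int_{\R^n \setminus K} |\nabla g|^{2} = 0$, so $g$ is locally constant, and decay at infinity forces $g \equiv 0$.

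The main obstacle is surjectivity: given a harmonic polynomial $P$, producing a $\Z_{2}$-harmonic function with that asymptotic. I would pass to the branched double cover $\pi : \tilde M \to \R^{n}$ along $K$, on which $\Z_{2}$-harmonic functions correspond to $\iota$-anti-invariant ordinary harmonic functions (with $\iota$ the covering involution) that are $L^{2}_{loc}$ across the fixed set $\pi^{-1}(K)$. Since $K$ is compact, $\tilde M$ has two Euclidean ends exchanged by $\iota$; the goal is an anti-invariant harmonic function asymptoting to $+P$ on one end and $-P$ on the other. Starting from the model $\chi P$ minus its $\iota$-pullback (with $\chi$ a cutoff at infinity), one corrects by solving $\Delta u = -\Delta(\mathrm{model})$ in a weighted Sobolev space adapted both to Euclidean infinity and to the codimension-$2$ singular locus $\pi^{-1}(K)$; anti-invariance of $u$ is then forced by uniqueness since the source is odd under $\iota$. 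The delicate issue is choosing the weights at $\pi^{-1}(K)$ so that the Laplacian is Fredholm and the $L^{2}_{loc}$ constraint picks out the admissible Donaldson leading term $\RRe{A\zeta^{1/2}}$ rather than the competing $\zeta^{-1/2}$ indicial root.
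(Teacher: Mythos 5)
This proposition is not proved in the paper at all: the author imports it verbatim from Weifeng Sun's preprint, citing it as Theorem 0.4 of \cite{Weifeng22} (with \cite{Haydiy23} providing a related treatment). There is therefore no internal proof to compare your argument against; the paper simply quotes the result and uses it in Sections 5 and 6.

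On the merits of your sketch itself. The well-definedness of the map $f\mapsto P$ is correct as stated: for $n\geq 3$ the exterior of a large ball is simply connected, so a single-valued branch exists and the multipole expansion gives a unique polynomial, with the sign ambiguity being exactly the branch ambiguity. The injectivity argument is essentially complete: the flux at $\partial B_R$ decays like $O(R^{2-n})$ and the flux on the $\epsilon$-tube decays like $O(\epsilon)$ once you invoke Donaldson's expansion $g=\RRe{A\zeta^{1/2}}+O(|\zeta|^{3/2})$, which is legitimate since $g$ is itself $\Z_2$-harmonic with the required $L^2_{loc}$ regularity; connectedness of $\R^n\setminus K$ and decay at infinity then force $g\equiv 0$. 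The surjectivity discussion, by contrast, is only an outline. The description of the branched double cover, the two ends swapped by $\iota$, and the anti-invariant model are all correct, but the genuinely hard step is the one you flag and then leave open: proving that the Laplacian is Fredholm of index zero (or at least surjective onto the odd part) between weighted spaces that simultaneously control decay at the two Euclidean ends and prevent the inadmissible $\RRe{A'\zeta^{-1/2}}$ leading term at the branch locus, and showing that the cokernel obstruction vanishes. Without pinning down the weight ranges where the indicial roots $1/2$ and $-1/2$ sit on either side and carrying out the a priori estimate, surjectivity is asserted rather than established. That gap is precisely where the substance of Sun's theorem lies, so your proposal should be read as a plausible roadmap rather than a proof.
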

	
	To show that two constructions agree when $n=3$, it suffices to prove that the asymptotic behaviors of the $\Z_{2}$-harmonic functions at large distances agree. We now estimate Donaldson's examples as in \cite{Donaldson25}. Decompose $S^{1}$ into $I_{1}\sqcup I_{2}$
		\begin{equation*}
			I_{1}=\{|w|<|\bs{x}|^{\delta}\}, I_{2}=S^{1}\setminus I_{1},
		\end{equation*}
	where $0<\delta\ll 1$. Using Taylor's expansion, we conclude that, on $I_{2}$
		\begin{equation*}
			\tan^{-1}(\frac{w}{\sqrt{Q}})=\frac{\pi}{2}+\frac{\sqrt{Q}}{w}+O(|w|^{-3}).
		\end{equation*}
	On the other hand, the measure of $I_{1}$ is bounded by $C|\bs{x}|^{\delta-1}$ and the integrand is bounded by $C|\bs{x}|^{2\delta}$. The $\Z_{2}$-harmonic function satisfies
		\begin{equation*}
			f=\RRe{\int_{0}^{2\pi}Q^{-3/2}\frac{\pi}{2}(w^{2}+Q)\ud \theta}+O(\max(|\bs{x}|^{3\delta-1},|\bs{x}|^{-\delta})).
		\end{equation*}
	Direct computation shows that
		\begin{equation*}
			\RRe{\int_{0}^{2\pi}Q^{-3/2}\big(\frac{\pi}{2}(w^{2}+Q)\big)\ud \theta}=c_{0}-c_{1}x_{1}^{2}-c_{2}x_{2}^{2}+c_{3}x_{3}^{2},
		\end{equation*}
	where
		\begin{equation*}
			\begin{split}
				&c_{0}=\frac{\pi}{2}\int_{0}^{2\pi}\frac{\ud \theta}{(1+\epsilon\cos(2\theta))^{1/2}};\\
				&c_{1}=\frac{\pi}{2}\int_{0}^{2\pi}\frac{\cos(\theta)^{2}\ud \theta}{(1+\epsilon\cos(2\theta))^{1/2}};\\
				&c_{2}=\frac{\pi}{2}\int_{0}^{2\pi}\frac{\sin(\theta)^{2}\ud \theta}{(1+\epsilon\cos(2\theta))^{1/2}};\\
				&c_{3}=c_{1}+c_{2}.
			\end{split}
		\end{equation*}
	Changing the variable in the integrand yields
		\begin{equation*}
			\begin{split}
				&c_{0}=2\pi\int_{0}^{\infty}\frac{\ud u}{\sqrt{(1-\epsilon+u^{2})(1+\epsilon+u^{2})}};\\
				&c_{1}=2\pi\int_{0}^{\infty}\frac{\ud u}{(1+\epsilon+u^{2})\sqrt{(1-\epsilon+u^{2})(1+\epsilon+u^{2})}};\\
				&c_{2}=2\pi\int_{0}^{\infty}\frac{\ud u}{(1-\epsilon+u^{2})\sqrt{(1-\epsilon+u^{2})(1+\epsilon+u^{2})}}.
			\end{split}
		\end{equation*}
	If we let $h_{1}=\sqrt{1+\epsilon}, h_{2}=\sqrt{1-\epsilon}$, then the ratios among $a_{i}$ constructed as in Proposition \ref{S_4_Prop_Asym} are identical to those among $c_{i}$.

\section{Lawlor's necks with small angle}\label{S_6}%%%%%%%
	In a slightly different vein, the differential $\ud f_{\bs{h}}$ defines a $\Z_{2}$-harmonic one form on $\R^{n}$. Regard $T^{*}\R^{n}=(x_{1},y_{1}\frac{\p}{\p x_{1}}, \cdots, x_{n}, y_{n}\frac{\p}{\p x_{n}})$ as $\CC^{n}$ such that $z_{i}=x_{i}+\sqrt{-1}y_{i}$, and let $(\CC^{n},\omega,g,\Omega)$ be the flat Calabi-Yau structure, where $g$ is the Euclidean metric on $\CC^{n}$ and
		\begin{equation*}
			\omega=\sum_{i=1}^{n}\frac{\sqrt{-1}}{2}\ud z_{i}\wedge \ud \overline{z_{i}}, \Omega=\ud z_{1}\wedge \cdots\wedge z_{n}.
		\end{equation*}	
	Moreover, the symplectic structure $\omega=\ud \lambda$ is exact with the Liouville form $\lambda=-\sum_{i=1}^{n} y_{i}\ud x_{i}$. A \textbf{special Lagrangian} submanifold is a real $n$-dimensional submanifold such that
		\begin{equation}\label{S_6_Eqn_Lagangle}
			\omega|_{L}=0, \IIm{e^{\II\hat{\theta}}\Omega|_{L}}=0, \hat{\theta}\in [0,2\pi).
		\end{equation}
	We call a special Lagrangian \textbf{exact} if $\lambda|_{L}$ is an exact form. In this section, we will discuss the relationship between the multi-valued graph $\ud f_{\bs{h}}$ and a family of exact special Lagrangians, which are known as Lawlor's necks \cite{Lawlor89, Harvay90}.\\
	
	Let $\Pi_{\bs{0}}, \Pi_{\bs{\phi}}$ be two special Lagrangian planes defined by
		\begin{equation*}
		\begin{split}
			&\Pi_{\bs{0}}=\{(x_{1},\cdots, x_{n})|x_{i}\in \R \},\\
			&\Pi_{\bs{\phi}}=\{(e^{\II \phi_{1}}x_{1},\cdots, e^{\II \phi_{n}}x_{n})|x_{i}\in \R\},\phi_{i}\in (0,\pi)
		\end{split}
		\end{equation*}
	The condition that Lagrangian angles $\hat{\theta}$ on $\Pi_{0},\Pi_{\bs{\phi}}$ as in Equation \ref{S_6_Eqn_Lagangle} are the same is equivalent to	
		\begin{equation*}
			\sum_{i=1}^{n}\phi_{i}=m\pi, m=1,\cdots,n-1.
		\end{equation*}
	A Lawlor's neck is a smoothing of the singular special Lagrangian $\Pi_{\bs{0}}\cup\Pi_{\bs{\phi}}$ in the case when
		\begin{equation}\label{S_6_Eqn_Angle}
			\sum_{i=1}^{n}\phi_{i}=\pi.
		\end{equation}
	For our purposes, we will consider a model acted by
		\begin{equation*}
			\textrm{diag}(e^{-\II \phi_{1}/2},\cdots,e^{-\II\phi_{n-1}/2},e^{\II(\pi-\phi_{n})/2})\in SU(n).
		\end{equation*}
	Denote two special Lagrangian planes as
		\begin{equation*}
		\begin{split}
			&\Pi_{-}=\{(e^{-\II \phi_{1}/2}x_{1},\cdots,e^{-\II \phi_{n-1}/2}x_{n-1},e^{\II(\pi-\phi_{n})/2}x_{n})\};\\
			&\Pi_{+}=\{(e^{\II \phi_{1}/2}x_{1},\cdots,e^{\II \phi_{n-1}/2}x_{n-1},e^{\II(\pi+\phi_{n})/2}x_{n})\};
		\end{split}
		\end{equation*}
	The Lawlor's necks $L_{\bs{c}}$ are constructed as follows. Let
		\begin{equation*}
			\bs{c}=(c_{1},\cdots, c_{n}), c_{i}>0,
		\end{equation*}
	be a multiple of positive parameters. Define
		\begin{equation*}
			P(y^{2})=\big(\prod_{k=1}^{n}(1+c_{k}^{2}y^{2})-1\big)/y^{2}
		\end{equation*}
		and
		\begin{equation*}
		\begin{split}
			&\psi_{i}(s)=\int_{0}^{s}\frac{c_{i}^{2}\ud y}{(1+c_{i}^{2}y^{2})\sqrt{P(y^{2})}},\\
			&\phi_{i}=\int_{-\infty}^{\infty}\frac{c_{i}^{2}\ud y}{(1+c_{i}^{2}y^{2})\sqrt{P(y^{2})}}.
		\end{split}
		\end{equation*}
	Here, $\phi_{i}$ satisfies the relation in Equation \ref{S_6_Eqn_Angle}. Let
		\begin{equation*}
			\begin{split}
				&z_{k}(s)=e^{\II \psi_{k}(s)}\sqrt{c_{k}^{-2}+s^{2}}, 1\leq k\leq n-1;\\
				&z_{n}(s)=e^{\II (\pi/2+\psi_{k}(s))}\sqrt{c_{n}^{-2}+s^{2}}.
			\end{split}
		\end{equation*}
		\begin{definition}\label{S_6_Defn_Lawlor}
			We define Lawlor's necks $L_{\bs{c}}$ as
			\begin{equation*}
				L_{\bs{c}}=\{(z_{1}(s)w_{1},\cdots, z_{n}(s)w_{n})|(w_{1},\cdots,w_{n})\in S^{n-1},s\in \R\}.
			\end{equation*}
		\end{definition}
	
	It can be checked that $L_{\bs{c}}$ is asymptotic to $\Pi_{+}\cup \Pi_{-}$ at infinity. Moreover, the projection $Pr_{X}: L_{\bs{c}}\to \Pi_{0}$ is given by 
		\begin{equation*}
			(w_{i},s)\mapsto (\cos(\psi_{i}(s))\sqrt{c_{i}^{-2}+s^{2}}w_{i},-\sin(\psi_{n}(s))\sqrt{c_{n}^{-2}+s^{2}}w_{n}).
		\end{equation*}
	Define 
		\begin{equation}\label{S_6_Eqn_ProjE}
			\begin{split}
				&m_{i}(s)=\cos(\psi_{i}(s))\sqrt{c_{i}^{-2}+s^{2}};\\
				&m_{n}(s)=\sin(\psi_{n}(s))\sqrt{c_{n}^{-2}+s^{2}},
			\end{split}
		\end{equation}
	accordingly. It can be checked that $m_{k}'(s)>0, 1\leq k \leq n$. As a consequence, the projection is a $2$-$1$ map away from a codimension-$2$ ellipsoid
		\begin{equation*}
			L_{\bs{c}}\cap \Pi_{0}:\sum_{k=1}^{n-1}c_{k}^{2}x_{k}^{2}=1, x_{n}=0.
		\end{equation*}
	In other words, $L_{\bs{c}}$ can be viewed as a multivalued graph from $\Pi_{\bs{0}}=\R^{n}$ to $\CC^{n}=T^{*}\R^{n}$. In particular, if we view $-\lambda_{L}$ as a multivalued one form on $\Pi_{0}$, then the multivalued graph is defined tautologically by $-\lambda_{L}$. Moreover, since $L_{\bs{c}}$ is diffeomorphic to $S^{n-1}\times \R$, which is simply connected. The Liouville form is exact
		\begin{equation*}
			\lambda_{L}=-\ud F_{\bs{c}}, F_{\bs{c}}\in C^{\infty}(L_{\bs{c}}),
		\end{equation*}
		on $L_{\bs{c}}$. We can normalize $F_{\bs{c}}$ so that $F_{\bs{c}}\equiv 0$ on the ellipsoid, for $\lambda_{\bs{c}}$ vanishes on the ellipsoid $L_{\bs{c}}\cap \Pi_{\bs{0}}$. In this case, $F_{\bs{c}}$ is a $\Z_{2}$-function on $\Pi_{\bs{0}}$.\\
	
	Now consider the case when $\phi_{i}, 1\leq i\leq n-1$ is sufficiently small. Define a family of multiple, parameterized by a sufficiently large positive number $t$
		\begin{equation*}
			\bs{c}_{t}:=(c_{1},\cdots,c_{n-1}, t).
		\end{equation*}
	Denote $C(y)$ as $\prod_{k=1}^{n-1}(1+c_{k}^{2}y)$. Computation implies
		\begin{equation}\label{S_6_Eqn_Angle_1}
		\begin{split}
			&\phi_{i}=t^{-1}\int_{-\infty}^{\infty}\frac{c_{i}^{2}\ud y}{(1+c_{i}^{2}y^{2})\sqrt{C(y^{2})}}+O(t^{-3}), 1\leq i\leq n-1\\
			&\phi_{n}=\frac{\pi}{2}-t^{-1}\int_{-\infty}^{\infty}\frac{C'(y^{2})\ud y}{C(y^{2})^{3/2}}+O(t^{-3})
		\end{split}
		\end{equation}
	
	On the other hand, the multivalued graph of $t^{-1}\ud f_{\bs{h}}$ is asymptotic to $\tilde{\Pi}_{+}\cup \tilde{\Pi}_{-}$, where
		\begin{equation*}
		\begin{split}
			&\tilde{\Pi}_{-}=\{(e^{-\II \tilde{\phi}_{1}/2}x_{1},\cdots,e^{-\II \tilde{\phi}_{n-1}/2}x_{n-1},e^{\II(\pi-\tilde{\phi}_{n})/2}x_{n})\};\\
			&\tilde{\Pi}_{+}=\{(e^{\II \tilde{\phi_{1}}/2}x_{1},\cdots,e^{\II \tilde{\phi}_{n-1}/2}x_{n-1},e^{\II(\pi+\tilde{\phi}_{n})/2}x_{n})\};\\
			&\tilde{\phi}_{i}=2\arctan(2t^{-1}a_{i})\in (0,\pi), 1\leq i\leq n;\\
			&\tilde{\phi}_{n}=\pi+2\arctan(2t^{-1}a_{n}) \in (0,\pi).
		\end{split}
		\end{equation*}
	The angles $\tilde{\phi}_{i}$ do not obey the equation \ref{S_6_Eqn_Angle}. Consequently, there is no uniform choice of the Lagrangian angle $\hat{\theta}$ that simultaneously makes $\tilde{\Pi}_{\pm}$ into special Lagrangians. But as $t\to +\infty$, they satisfy
		\begin{equation}\label{S_6_Eqn_AngleIn}
			\pi<\sum_{i=1}^{n}\tilde{\phi}_{i}<\pi+O(t^{-3}).
		\end{equation}  
    Indeed, use Taylor's expansion
		\begin{equation}\label{S_6_Eqn_Angle_2}
			\begin{split}
				&\tilde{\phi}_{i}=t^{-1}\int_{-\infty}^{\infty}\frac{\sqrt{\sigma_{n-1}(\bs{h})}\ud u}{(u^{2}+h_{i}^{2})\sqrt{S(u^{2})}}+O(t^{-3});\\
				&\tilde{\phi}_{n}=\pi-t^{-1}\int_{-\infty}^{\infty}\frac{\sqrt{\sigma_{n-1}(\bs{h})}S'(u^{2})\ud u}{S(u^{2})^{3/2}}+O(t^{-3}).
			\end{split}
		\end{equation}
	
	If we let $c_{i}=\frac{1}{h_{i}}$ and compare Equation \ref{S_6_Eqn_Angle_1} and \ref{S_6_Eqn_Angle_2}, the difference between $\phi_{i}$ and $\tilde{\phi}_{i}$ is bounded by $O(t^{-3})$. Moreover, we will have the following.
		\begin{proposition}\label{S_6_Prop_Converge}
			Suppose the multiple is given by
				\begin{equation*}
					\bs{c}_{t}=(\frac{1}{h_{1}},\cdots,\frac{1}{h_{n-1}},t),
				\end{equation*}
				then the $2$-valued function $tF_{\bs{c}_{t}}$ converges to the $\Z_{2}$-harmonic function $f_{\bs{h}}$ in $C^{\infty}_{loc}(\R^{n}\setminus E_{\bs{h}})$.
		\end{proposition}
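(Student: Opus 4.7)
The strategy is to derive an explicit integral representation for $F_{\bs{c}_t}$ in the parametrization $(w,s) \in S^{n-1}\times\R$ of Lawlor's neck, expand it in powers of $t^{-1}$, and identify the limit with $f_{\bs{h}}$ via Sun's uniqueness theorem (Proposition \ref{S_5_Prop_Uniqueness}).

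First I would use the normalization $F_{\bs{c}_t}|_{s=0}=0$ together with $dF_{\bs{c}_t}=-\lambda_L$ to write
\begin{equation*}
	F_{\bs{c}_t}(w,s)=\int_0^s\sum_{k=1}^n y_k(w,u)\,\partial_u x_k(w,u)\,du,
\end{equation*}
substitute the formulas of Definition \ref{S_6_Defn_Lawlor}, and use the identity $\psi_k'(u)(c_k^{-2}+u^2)=1/\sqrt{P(u^2)}$ to collapse the integrand to a sum of terms of the form $u\sin(2\psi_k(u))w_k^2$ and $\sqrt{P(u^2)}^{-1}(1\pm\cos(2\psi_k(u)))w_k^2$; the $\Z_2$-antisymmetry $F_{\bs{c}_t}(w,-s)=-F_{\bs{c}_t}(w,s)$ is then manifest. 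With $c_n=t$ and $c_k=1/h_k$ fixed for $k<n$, the expansions $\sqrt{P(u^2)}=t\sqrt{C(u^2)}(1+O(t^{-2}))$, $\psi_k(u)=O(t^{-1})$ for $k<n$, and $\psi_n(u)=\arctan(tu)+O(t^{-2})$ yield a pointwise limit $tF_{\bs{c}_t}(w,s)\to G(w,s)$ uniform on parameter sets with $|s|\geq\delta>0$; these pull back to arbitrary compact subsets of $\R^n\setminus E_{\bs{h}}$. Differentiating under the integral produces uniform $C^k$ bounds, so Arzel\`a--Ascoli upgrades convergence to $C^\infty_{\mathrm{loc}}$ along subsequences.

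It remains to identify $G$ with $f_{\bs{h}}$ (up to the branch-dependent sign), which I would do via uniqueness. The limit $G$ is a $\Z_2$-function on $\R^n\setminus E_{\bs{h}}$ by construction. It is harmonic because the special Lagrangian condition $\IIm{\det(I+i\,\mathrm{Hess}\,F_{\bs{c}_t})}=0$ linearizes to $\Delta F_{\bs{c}_t}=O(|\mathrm{Hess}\,F_{\bs{c}_t}|^3)=O(t^{-3})$, whence $\Delta(tF_{\bs{c}_t})=O(t^{-2})\to 0$. At infinity, the tangent planes $\Pi_\pm$ combined with $\tan(\phi_k/2)\sim 2t^{-1}a_k$ for $k<n$, and $-\cot(\phi_n/2)\sim 2t^{-1}a_n$ (using $\sum_k\phi_k=\pi$ and $a_n=-\sum_{k<n}a_k$), recover the quadric part of the harmonic polynomial in Proposition \ref{S_4_Prop_Asym}. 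The main obstacle will be pinning down the constant term $a_0$: this amounts to computing the finite part of $\int_0^s(\sum_k y_k\partial_u x_k)\,du$ as $s\to\infty$ after subtracting the divergent quadratic contribution, and requires careful control of the subleading $O(t^{-2})$ corrections. Once the full harmonic polynomial asymptotics of $tF_{\bs{c}_t}$ match $a_0-\sum_i a_i x_i^2$, Proposition \ref{S_5_Prop_Uniqueness} forces $G=f_{\bs{h}}$ on each branch, promoting the subsequential convergence to full-sequence convergence.
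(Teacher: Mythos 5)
Your overall strategy parallels the paper's: both use the special Lagrangian equation $\IIm{\det(I+\II t^{-1}\mathrm{Hess}\,F)}=0$ to show the limit is harmonic, and both invoke Sun's uniqueness theorem to identify the limit. The differences lie in two places. First, to control the Hessian of $tF_{\bs{c}_t}$ uniformly away from $E_{\bs{h}}$, you propose differentiating under the integral in the $(w,s)$-parametrization and appealing to Arzel\`a--Ascoli; the paper instead exploits the graph structure directly, writing $\mathrm{Hess}(tF_{\bs{c}_t})=DPr_{Y}\circ(DPr_{X})^{-1}$, then bounding $|(DPr_{X})^{-1}|$ on exhausting regions $U_{v_1,v_2}$ and $V_{\delta_1,\delta_2}$, including the sharper statement $|(DPr_{X}^{\infty})^{-1}|=O(r^{-1/2})$ near the ellipsoid. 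Your route can be made to work, but transferring $C^k$ bounds from $(w,s)$-coordinates to $x$-coordinates passes through exactly this Jacobian, so you cannot avoid controlling $(DPr_X)^{-1}$; the paper does it head-on.

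Second, and more substantively: your identification step requires verifying that the constant term of the asymptotic expansion of $G$ equals $a_0$, which as you note is ``the main obstacle''---a finite-part computation of $\int_0^s \sum_k y_k\,\partial_u x_k\,du$ as $s\to\infty$ with $O(t^{-2})$ corrections, and you do not carry it out. The paper sidesteps this calculation entirely: matching only the quadric asymptotics at infinity (from the tangent planes $\Pi_\pm$ vs.\ $\tilde\Pi_\pm$, which agree to $O(t^{-3})$) gives $F-f_{\bs{h}}=C+O(|\bs{x}|^{2-n})$, and then Sun's theorem forces $F-f_{\bs{h}}=c f_0$ where $f_0=\int_0^{\mu_n}\ud u/\sqrt{S(u^2)}$ is the bounded ellipsoidal harmonic from Equation~\ref{S_3_Eqn_EllHar}. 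Since $\ud f_0\sim A(\Theta)\RRe{z^{-1/2}}$ blows up at $E_{\bs{h}}$ while $|\ud F|$ and $|\ud f_{\bs{h}}|$ stay bounded there (the former by the Jacobian estimate, the latter by non-degeneracy), one concludes $c=0$. This regularity argument replaces the $a_0$ computation and is the decisive simplification; without it, or without the explicit integral evaluation you flag but defer, your argument is incomplete at that step.
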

		\begin{proof}
			Let $U\Subset \R^{n}\setminus E_{\bs{h}}$ be a simply connected open set, we can choose a single valued branch of $F_{\bs{c}_{t}}$. It follows from the fact that $\ud F_{\bs{c}_{t}}$ defines a special Lagrangian, $tF_{\bs{c}_{t}}$ is a solution to
				\begin{equation*}
					\IIm{\det(I+\sqrt{-1}t^{-1}Hess(F))}=0, \text{ on $U$.}
				\end{equation*}
			
			Let $\sigma_{k}(Hess(f))$ be the $k$-th symmetric polynomial of the eigenvalues of $Hess(F)$. Expand the above equation
				\begin{equation*}
					\Delta F-t^{-2}\sigma_{3}(Hess(F))+\cdots+(-1)^{k}t^{-2k}\sigma_{2k+1}(Hess(F)),
				\end{equation*}
				where $k=\lfloor \frac{n-1}{2}\rfloor$. We will show that $tF_{\bs{c}_{t}}$ converges to a smooth function $F$ on $U$, and the equation will imply $F$ is harmonic. The uniqueness theorem will imply $F=f_{\bs{h}}$. Consequently, $tF_{\bs{c}_{t}}$ converges to $f_{\bs{h}}$ in $C_{loc}^{\infty}(\R^{n})$.\\
				
			To see this, we need to understand the behavior of the projection map when $t$ is large. Recall that
				\begin{equation*}
					Pr_{X}=(m_{i}(s)w_{i},-m_{n}(s)w_{n}),
				\end{equation*}
			where
				\begin{equation*}
				\begin{split}
					&m_{i}(s)=\sqrt{c_{i}^{-2}+s^{2}}\cos(\psi_{i}(s)), 1\leq i\leq n,\\
					&m_{n}(s)=\sqrt{t^{-2}+s^{2}}\sin(\psi_{n}(s)).
				\end{split}
				\end{equation*}
			Similarly, we define $Pr_{Y}$ as the projection of the $2$-valued graph onto the cotangent direction
				\begin{equation*}
					Pr_{Y}=(d_{i}(s)w_{i}, d_{n}(s)),
				\end{equation*}
			where
				\begin{equation*}
					\begin{split}
						& d_{i}(s)=t\sqrt{c_{i}^{-2}+s^{2}}\sin(\psi_{i}(s)), 1\leq i\leq n-1,\\
						& d_{n}(s)=\sqrt{1+s^{2}t^{2}}\cos(\psi_{n}(s)).
					\end{split}
				\end{equation*}
			
			We now consider the asymptotic behavior of $\psi_{i}(s), \psi_{n}(s)$ as $t \to \infty$. Let $C(x)=\prod_{j=1}^{n-1}(1+c_{j}^{2}x)$ and $\tilde{C}(x)=(C(x)-1)/x$. Then
				\begin{equation*}
					t\psi_{i}(s)=\int_{0}^{s}\frac{c_{i}^{2}\ud u}{(1+c_{i}^{2}u^{2})\sqrt{C(u^{2})+t^{-2}\tilde{C}(u^{2})}}, 1\leq i\leq n-1
				\end{equation*}
				the function $t\psi_{i}(s)$ converges to $\beta_{i}(s)$ in $C_{loc}^{\infty}(\R)$ with error bounded by $O(t^{-2})$. Here,
					\begin{equation}\label{S_6_Eqn_Est_1}
						\beta_{i}(s)=\int_{0}^{s}\frac{c_{i}^{2}\ud u}{(1+c_{i}^{2}u^{2})\sqrt{C(u^{2})}}
					\end{equation}
				Similarly, one can show that $t(\psi_{n}(s)-\arctan(st))$ converges to
					\begin{equation}\label{S_6_Eqn_Est_2}
						\beta_{n}(s):=-\int_{0}^{s}\frac{\tilde{C}(u^{2})\ud u}{C(u^{2})+\sqrt{C(u^{2})}}
					\end{equation}
					in $C_{loc}^{\infty}(\R)$ with an error bounded by $O(t^{-2})$. To understand the behavior of $\psi_{i}, \psi_{n}$, we will consider two cases. The first one, $|s|$, is bounded from below and above, and the second one, $|s|$, is sufficiently small.
				\begin{enumerate}
					\item Let $0<v_{2}<v_{1}$, in the region $\{v_{2}/2<|s|<2v_{1}\}$
						\begin{equation*}
							\psi_{i},\psi_{i}',\psi_{i}'',\pi/2-\psi_{n},\psi_{n}',\psi_{n}''=O(t^{-1})
						\end{equation*}

					\item In the region $\{|s|<\delta_{1}<\delta\ll 1\}$
						\begin{equation*}
							\begin{split}
								&\psi_{i},\psi_{i}',\psi_{i}''=O(t^{-1}),\\
								& \psi_{n}=\arctan(st)+O(st^{-1}).
							\end{split}
						\end{equation*}
						In particular, we have the following estimates
						\begin{equation*}
							\begin{split}
								& \cos(\psi_{n})=\frac{1}{\sqrt{1+s^{2}t^{2}}}-\frac{s\beta_{n}(s)}{\sqrt{1+s^{2}t^{2}}}+O(t^{-3}),\\
								& \sin(\psi_{n})=\frac{st}{\sqrt{1+s^{2}t^{2}}}+\frac{t^{-1}\beta_{n}(s)}{\sqrt{1+s^{2}t^{2}}}+O(t^{-3}).
							\end{split}
						\end{equation*}
						
				\end{enumerate}
			Therefore, $Pr_{X},Pr_{Y}$ converges to
				\begin{equation}\label{S_6_Eqn_ApproxPr}
					\begin{split}
						&Pr_{X}^{\infty}:=(w_{i}\sqrt{c_{i}^{-2}+s^{2}},-w_{n}s),\\
						&Pr_{Y}^{\infty}:=(w_{i}\beta_{i}(s)\sqrt{c_{i}^{-2}+s^{2}}, w_{n}(1-s\beta_{n}(s)) ),
					\end{split}
				\end{equation}
			in $C^{\infty}_{loc}(\R\times S^{n-1})$ respectively, with an error bounded by $O(t^{-2})$.\\
			
			We define two kinds of open sets in $\R^{n}\setminus E_{\bs{h}}$, which exhaust $\R^{n}\setminus E_{\bs{h}}$
				\begin{equation*}
					U_{v_{1},v_{2}}:=U_{v_{1}}\setminus \overline{U_{v_{2}}},
				\end{equation*}
				where
				\begin{equation*}
					U_{v}:\frac{x_{1}^{2}}{c_{1}^{-2}+v^{2}}+\cdots+\frac{x_{n-1}^{2}}{c_{n-1}^{-2}+v^{2}}+\frac{x_{n}^{2}}{v^{2}}< 1,
				\end{equation*}
				and
				\begin{equation*}
					\begin{split}
				    &V_{\delta_{1},\delta_{2}}:c_{1}^{2}x_{1}^{2}+\cdots+c_{n-1}^{2}x_{n-1}^{2}<1-\delta_{1}^{2}, |x_{n}|<\delta_{2},\\
						&V_{0}:c_{1}^{2}x_{1}^{2}+\cdots+c_{n-1}^{2}x_{n-1}^{2}<1, x_{n}=0.
					\end{split}
				\end{equation*}

			When $v_{1},v_{2}$ are fixed and $t$ is sufficiently large, then
				\begin{equation}\label{S_6_Eqn_Approx_1}
					Pr_{X}=(w_{i}\sqrt{c_{i}^{-2}+s^{2}},-w_{n}s)+O(t^{-2}),
				\end{equation}
			in the region $\{v_{2}/2<|s|<2v_{1}\}$. Consequently, when $t$ is large
				\begin{equation*}
					U_{v_{1},v_{2}}\subset Pr_{X}(\{v_{2}/2<|s|<2v_{1}\}).
				\end{equation*}

			Meanwhile, let $\delta$ be a sufficiently small constant, in the region $\{|s|<\delta_{1}<\delta\ll 1\}$, the Equation \ref{S_6_Eqn_Est_2} implies
				\begin{equation*}
					Pr_{X}=(w_{i}\sqrt{c_{i}^{-2}+s^{2}},-w_{n}s)+O(t^{-1}),
				\end{equation*}
			and
				\begin{equation}\label{S_6_Eqn_Approxn}
					\big|c_{1}^{2}x_{1}^{2}+\cdots+c_{n-1}^{2}x_{n-1}^{2}-(1-w_{n}^{2})\big|<\tilde{C}^{2}s^{2},
				\end{equation}
			for a constant $\tilde{C}$ depending only on $\delta, c_{1},\cdots,c_{n-1}$. When $t>N(\delta_{1})$ is sufficiently large, we conclude that
				\begin{equation*}
					V_{2\tilde{C}\delta_{1},\frac{1}{2}\delta_{1}}\subset Pr_{X}(\{|s|<\delta_{1},w_{n}>2\tilde{C}\delta_{1}\}).
				\end{equation*}
			
			It follows naturally that:
				\begin{equation*}
					\bigcup_{0<v_{2}<v_{1}}U_{v_{1},v_{2}}=\R^{n}\setminus \overline{V_{0}},
				\end{equation*}
			and that
				\begin{equation*}
					\bigcup_{0<\delta_{1}<\delta}V_{2\tilde{C}\delta_{1},\delta_{1}}\supset V_{0}.
				\end{equation*}
			
			To estimate the $\|tF_{\bs{c}_{t}}\|$, we only need to give an upper bound on the Jacobian $DPr_{X}^{-1}$ away from the branching set. When $v>|s|>\delta>0$, $|(DPr_{X}^{\infty})^{-1}|$ is bounded from above. It suffices to bound $|(DPr_{X}^{\infty})^{-1}|$ in $\{|w_{n}|>2\tilde{C}\delta_{1}\}$. In this case, choose
				\begin{equation*}
					(w_{1},\cdots, w_{n-1},\sqrt{1-w_{1}^{2}-\cdots-w_{n-1}^{2}}),
				\end{equation*}
				as local coordinates of $S^{n-1}$. The determinant
				\begin{equation*}
					\det(DPr_{X}^{\infty})=-w_{n}\prod_{j}^{n-1}(c_{j}^{-2}+s^{2})^{1/2}+O(\frac{s^{2}}{|w_{n}|}).
				\end{equation*}
				Consequently, $|(DPr_{X}^{\infty})^{-1}|$ is bounded in $V_{2\tilde{C}\delta_{1},\frac{1}{2}\delta_{1}}$. Notice that
				\begin{equation*}
					Hess(tF_{\bs{c}_{t}})=DPr_{Y}\circ (DPr_{X})^{-1},
				\end{equation*}
				we conclude $|\nabla^{k}tF_{\bs{c}_{t}}|<C_{k}$ on $V_{2\tilde{C}\delta_{1},\delta_{1}}$. Moreover, at the branching ellipsoid $w_{n}=s=0$, one can check that
					\begin{equation*}
						Pr_{X}^{\infty}=(\frac{w_{i}}{c_{i}^{2}\sqrt{1-w_{n}^{2}}}(1+\frac{c_{i}^{2}s^{2}-w_{n}^{2}}{2}),-w_{n}s)+O((s^{2}+w_{n}^{2})^{2})
					\end{equation*}
					and we can show
					\begin{equation*}
						|(DPr_{X}^{\infty})^{-1}|=O(r^{-1/2}),
					\end{equation*}
					where $r$ is the distance to the branching set. Combine with the normalization condition $tF_{\bs{c}_{t}}=0$ on the branching set, we conclude that $tF_{\bs{c}_{t}}$ converges to a $2$-valued smooth function $F$ in $C^{\infty}_{loc}(\R^{n}\setminus E_{\bs{h}})$.\\
					
			As we have shown before, $F$ is $\Z_{2}$-harmonic on $\R^{n}\setminus E_{\bs{h}}$. In particular, $|\nabla F|, |F|$ are bounded near $E_{\bs{h}}$. We can extend $|F|$ to $E_{\bs{h}}$ continuously by setting $|F|=0$ on $E_{\bs{h}}$. It remains to show $F=f_{\bs{h}}$. Because $\ud F$ and $\ud f_{\bs{h}}$ have the same asymptotic behavior at infinity, we obtain
				\begin{equation*}
					F-f_{\bs{h}}=C+O(|\bs{x}|^{2-n}).
				\end{equation*}
				The uniqueness Theorem \ref{S_5_Prop_Uniqueness} and Equation \ref{S_3_Eqn_EllHar} imply
				\begin{equation*}
					F=f_{\bs{h}}+c\int_{0}^{\mu_{n}}\frac{\ud u}{\sqrt{S(u^{2})}},
				\end{equation*}
				for a constant $c$. Notice that
				\begin{equation*}
					\ud (\int_{0}^{\mu_{n}}\frac{\ud u}{\sqrt{S(u^{2})}})\sim A(\Theta)\RRe{z^{-1/2}},
				\end{equation*}
				near $E_{\bs{h}}$, which is unbounded. This forces $c=0$, since $|\ud F|$ and $|\ud f_{\bs{h}}|$ are bounded near $E_{\bs{h}}$.
		\end{proof}
		
	As a corollary of the above proposition, we are able to show	
		\begin{corollary}\label{S_6_Cor_nonVanishing}
			The $\Z_{2}$-harmonic one-form $\ud f_{\bs{h}}\neq 0$ on $\R^{n}\setminus E_{\bs{h}}$. 
		\end{corollary}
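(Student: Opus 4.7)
The plan is to use Proposition \ref{S_6_Prop_Converge} together with the explicit limits in Equation \ref{S_6_Eqn_ApproxPr} of the projections $Pr_{X}$ and $Pr_{Y}$ to reduce the vanishing question for $\ud f_{\bs{h}}$ to a vanishing question for $Pr_{Y}^{\infty}$. Concretely, I fix $p_{0}\in \R^{n}\setminus E_{\bs{h}}$ and pick a preimage $(s_{0},w_{0})\in \R\times S^{n-1}$ under $Pr_{X}^{\infty}$. I will argue that $\ud f_{\bs{h}}(p_{0}) = Pr_{Y}^{\infty}(s_{0},w_{0})$ and then verify directly that $Pr_{Y}^{\infty}(s_{0},w_{0})\neq 0$.

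For the identification, the proof of Proposition \ref{S_6_Prop_Converge} already shows that $DPr_{X}^{\infty}$ is invertible with uniformly bounded inverse in a neighborhood of any $(s_{0},w_{0})$ lying outside the branching locus $\{s=0,\,w_{n}=0\}$. Combined with the $C^{\infty}_{loc}$ convergence $Pr_{X}\to Pr_{X}^{\infty}$, the implicit function theorem produces a family $(s_{t},w_{t})\to (s_{0},w_{0})$ with $Pr_{X}(s_{t},w_{t})=p_{0}$ for all large $t$. By construction, $\ud(tF_{\bs{c}_{t}})(p_{0})=Pr_{Y}(s_{t},w_{t})$ on the corresponding branch, and the $C^{\infty}_{loc}$ convergence $tF_{\bs{c}_{t}}\to f_{\bs{h}}$ then yields $\ud f_{\bs{h}}(p_{0})=Pr_{Y}^{\infty}(s_{0},w_{0})$.

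Now suppose $Pr_{Y}^{\infty}(s_{0},w_{0})=0$. From Equation \ref{S_6_Eqn_Est_2}, the integrand defining $\beta_{n}$ is strictly positive, since $\tilde{C}(u^{2})=(C(u^{2})-1)/u^{2}>0$ for $u\neq 0$, so $\beta_{n}(s)$ has sign opposite to $s$. Consequently $s_{0}\beta_{n}(s_{0})\leq 0$ and the $n$-th component $w_{0,n}(1-s_{0}\beta_{n}(s_{0}))$ vanishes only if $w_{0,n}=0$. For the remaining components $w_{0,i}\beta_{i}(s_{0})\sqrt{c_{i}^{-2}+s_{0}^{2}}$ to vanish, one uses that Equation \ref{S_6_Eqn_Est_1} makes $\beta_{i}$ strictly monotone with $\beta_{i}(0)=0$, so $\beta_{i}(s_{0})=0$ iff $s_{0}=0$. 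Two cases arise: if $s_{0}=0$, then $Pr_{X}^{\infty}(0,w_{0})=(h_{i}w_{0,i},0)$ with $\sum_{i<n}w_{0,i}^{2}=1$, forcing $p_{0}\in E_{\bs{h}}$; if $s_{0}\neq 0$, all $w_{0,i}=0$ for $i<n$, contradicting $w_{0}\in S^{n-1}$ together with $w_{0,n}=0$. Either way one reaches a contradiction with $p_{0}\notin E_{\bs{h}}$.

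The decisive computation is the strict positivity $1-s\beta_{n}(s)\geq 1$, which rules out vanishing of the $x_{n}$-component of $\ud f_{\bs{h}}$ away from the branching ellipsoid; this is the essential ingredient that makes the otherwise formal parametrization argument yield a nontrivial conclusion. The only delicate step beyond that is making sure the preimages of $p_{0}$ under $Pr_{X}$ exist and converge to preimages under $Pr_{X}^{\infty}$, which is handled by the invertibility estimates on $DPr_{X}^{\infty}$ already established inside the proof of Proposition \ref{S_6_Prop_Converge}.
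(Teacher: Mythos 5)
Your proof is correct and follows essentially the same approach as the paper's: identify $\ud f_{\bs{h}}$ with $Pr_{Y}^{\infty}$ via Proposition \ref{S_6_Prop_Converge}, then show $Pr_{Y}^{\infty}(s,w)=0$ forces $s=w_{n}=0$, which is the preimage of $E_{\bs{h}}$. The only difference is that you spell out the details—the sign analysis $1-s\beta_{n}(s)\geq 1$ and the strict monotonicity of $\beta_{i}$—which the paper compresses into the phrase ``it is easy to check,'' and you supply the implicit-function-theorem argument connecting $\ud f_{\bs{h}}(p_{0})$ to $Pr_{Y}^{\infty}(s_{0},w_{0})$ that the paper leaves implicit in its appeal to the limiting parametrization.
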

		\begin{proof}
			Use the above proposition, the graph of $\ud f_{\bs{h}}$ is parametrized by
				\begin{equation*}
					(Pr_{X}^{\infty},Pr_{Y}^{\infty}).
				\end{equation*}
				It is easy to check that $Pr_{Y}^{\infty}=0$ if and only if $s=w_{n}=0$, which corresponds to the branching set $E_{\bs{h}}$.
		\end{proof}
	
\nocite{*}

\bibliographystyle{alpha}

\begin{thebibliography}{99}

    \bibitem{Chen24} Chen J, He S. On the existence and rigidity of critical Z2 eigenvalues[J]. arXiv preprint arXiv:2404.05387, 2024.
	
	\bibitem{Donaldson17} S.~K. Donaldson, Deformations of multivalued harmonic functions, Q. J. Math. {\bf 72} (2021), no.~1-2, 199--235; MR4271385
	
	\bibitem{Donaldson17A} S.~K. Donaldson, Adiabatic limits of co-associative Kovalev-Lefschetz fibrations, in {\it Algebra, geometry, and physics in the 21st century}, 1--29, Progr. Math., 324, Birkh\"auser/Springer, Cham, ; MR3702382

	\bibitem{Donaldson25} S.~K. Donaldson, Twistor construction of some multivalued harmonic functions on ${\R}^{3} $[J]. arXiv preprint arXiv:2504.12716, 2025.
	
	\bibitem{Dass12} G. Dassios, {\it Ellipsoidal harmonics}, Encyclopedia of Mathematics and its Applications, 146, Cambridge Univ. Press, Cambridge, 2012; MR2977792

    \bibitem{Esfahani24} Esfahani S H, Li Y. Fueter sections and $\mathbb {Z} _2 $-harmonic 1-forms[J]. arXiv preprint arXiv:2410.06367, 2024.
	
	\bibitem{SqiHe23} S. He, The branched deformations of the special Lagrangian submanifolds, Geom. Funct. Anal. {\bf 33} (2023), no.~5, 1266--1321; MR4646409
	
	\bibitem{SqiHe24} S. He, Parker G J. $\Z_{2} $-Harmonic Spinors and 1-forms on Connected sums and Torus sums of $3$-manifolds[J]. arXiv preprint arXiv:2407.10922, 2024.

    \bibitem{SqiHe25} S. He, Existence of nondegenerate $\Bbb Z/2$ harmonic 1-forms via $\Bbb Z_3$ symmetry, Geom. Dedicata {\bf 219} (2025), no.~2, Paper No. 28, 13 pp.; MR4871942

    \bibitem{Mazzeo16} R.~R. Mazzeo et al., Ends of the moduli space of Higgs bundles, Duke Math. J. {\bf 165} (2016), no.~12, 2227--2271; MR3544281
    
	\bibitem{Weifeng22} Sun W. On $\Z_{2}$ harmonic functions on $\mathbb {R}^ 2$ and the polynomial Pell's equation[J]. arXiv preprint arXiv:2209.11893, 2022.

    \bibitem{Haydys15} A. Haydys and T. Walpuski, A compactness theorem for the Seiberg-Witten equation with multiple spinors in dimension three, Geom. Funct. Anal. {\bf 25} (2015), no.~6, 1799--1821; MR3432158
	
	\bibitem{Haydiy23} Haydys A, Mazzeo R, Takahashi R. New examples of $\mathbb{Z}_{2}$ harmonic 1-forms and their deformations[J]. arXiv preprint arXiv:2307.06227, 2023.
	
	\bibitem{Taubes12} Taubes C H. $PSL (2; \CC)$ connections on 3-manifolds with $L2$ bounds on curvature[J]. arXiv preprint arXiv:1205.0514, 2012.

    \bibitem{Taubes20} C.~H. Taubes and Y. Wu, Examples of singularity models for $\Bbb Z/2$ harmonic 1-forms and spinors in dimensional three, in {\it Proceedings of the G\"okova Geometry-Topology Conferences 2018/2019}, 37--66, Int. Press, Somerville, MA, ; MR4251085

    \bibitem{Taubes24} C.~H. Taubes and Y. Wu, Topological aspects of $\Bbb{Z} / 2 \Bbb{Z}$ eigenfunctions for the Laplacian on $S^2$, J. Differential Geom. {\bf 128} (2024), no.~1, 379--462; MR4773187
	
	\bibitem{Lawlor89} G.~R. Lawlor, The angle criterion, Invent. Math. {\bf 95} (1989), no.~2, 437--446; MR0974911
	
	\bibitem{Harvay90} F.~R. Harvey, {\it Spinors and calibrations}, Perspectives in Mathematics, 9, Academic Press, Boston, MA, 1990; MR1045637
	
	\bibitem{Yan25} D. Yan, On nondegenerate $\mathbb {Z}_{2} $-harmonic $1$-forms with shrinking branching sets[J]. arXiv preprint arXiv:2510.07678, 2025.

    \bibitem{Commu_1} Private communication with Yuanbo Zhou.
	
	
\end{thebibliography}

\end{document}